
\documentclass[leqno]{amsart}
\usepackage{amsfonts,amsmath,enumerate,latexsym,verbatim,amscd,mathrsfs,color,array,dsfont,appendix}

\usepackage[latin1]{inputenc}
\newtheorem{theorem}{Theorem}[section]
\newtheorem{remark}[theorem]{Remark}
\newtheorem{proposition}[theorem]{Proposition}
\newtheorem{hypothesis}[theorem]{Hypothesis}
\newtheorem{lemma}[theorem]{Lemma}
\newtheorem{definition}[theorem]{Definition}



\setlength{\oddsidemargin}{.5cm}
\setlength{\evensidemargin}{.5cm}
\setlength{\textwidth}{15cm}  
\setlength{\textheight}{20cm}
\setlength{\topmargin}{1cm}

\usepackage[colorlinks=true]{hyperref}
\hypersetup{urlcolor=blue,linkcolor=black,citecolor=black,colorlinks=true}

\DeclareMathAlphabet{\mathpzc}{OT1}{pzc}{m}{it}
\setcounter{page}{1}

 \usepackage{amsmath,amsthm,amsfonts,amssymb}
\usepackage[mathscr]{euscript}
 \usepackage{hyperref} 
 \usepackage{mathrsfs} 
\usepackage{graphicx}
\usepackage{color}
  \usepackage{epsfig}

\begin{document}
 
\title{ Well-posedness for a coagulation multiple-fragmentation equation}
\thanks{Accepted for publication: July 2013.}
\thanks{AMS Subject Classifications:  45K05.} 
\date{}
\maketitle     
 
\vspace{ -1\baselineskip}

{\small
\begin{center}
{\sc Eduardo CEPEDA}   
 \\
 Laboratoire d'Analyse et de Math\'ematiques Appliqu\'ees, UMR 8050
 \\
  Universit\'e Paris-Est. 61, avenue du G\'en\'eral de Gaulle, 94010 Cr\'eteil C\'edex  \\[10pt]
 (Accepted in \textit{Differential and Integral Equations})  
\end{center}
}

\numberwithin{equation}{section}
\allowdisplaybreaks


 \smallskip

 \begin{quote}
\footnotesize
{\bf Abstract.}  
We consider a coagulation multiple-fragmentation equation, which
describes the concentration $c_t(x)$ of particles of 
mass $x \in (0,\infty)$ at the instant $t \geq 0$ in 
a model where fragmentation and coalescence phenomena
 occur. We study the existence and uniqueness of 
 measured-valued solutions to this equation for 
 homogeneous-like kernels of homogeneity parameter 
 $\lambda \in (0,1]$ and bounded fragmentation kernels,
  although a possibly infinite total fragmentation rate, 
 in particular an infinite number of fragments, 
 is considered. This work relies on the use of a
  Wasserstein-type distance, which has shown to
   be particularly well-adapted to coalescence 
   phenomena. It was introduced in previous
    works on coagulation and coalescence.

\end{quote}

\section{Introduction}
\setcounter{equation}{0}

The coagulation-fragmentation equation is a deterministic equation that models the evolution in time
of a system of a very big number of particles (mean-field description) undergoing coalescences and
fragmentations. The particles in the system grow and decrease due to successive mergers and
dislocations, each particle is fully identified by its mass $x\in(0,\infty)$, we do not consider its
position in space, its shape nor other geometrical properties. Examples of applications of these
models arise in polymers, aerosols and astronomy.

In these notes we are interested in the phenomena of coagulation and fragmentation at microscopic
scale, we will describe the evolution of the concentration of particles of mass $x$ in the
following way. On the one hand, the coalescence of two particles of mass $x$ and $y$ gives birth a
new one of mass $x+y$, $\{x,y\}\rightarrow x+y$ with a rate proportional to the coagulation kernel
$K(x,y)$. On the other hand, the fragmentation of a particle of mass $x$ gives birth a new set of
smaller particles $x \rightarrow \{\theta_1 x, \theta_2 x,\ldots\}$, where $\theta_i x$ represents
the fragments of $x$, with a rate proportional to $F(x)\beta(d\theta)$ and where $F:(0,\infty)
\rightarrow (0,\infty)$ and $\beta$ is a positive measure on the set $\Theta = \left\lbrace\theta =
(\theta_i)_{i\geq 1} : 1> \theta_1\geq \theta_2 \geq \ldots\geq 0 \right\rbrace$. This means that
the distribution of the ratios of daughter masses to parent mass is only determined by a function of
these ratios (and not by the parent mass). Denoting $c_t(x)$ the concentration of particles of mass
$x\in(0,\infty)$ at time $t$, the dynamics of $c$ is given by
\begin{align}
\notag
\partial_t c_t(x) & =   \frac{1}{2}\int_0^{x} K(y,x-y) c_t(y)c_t(x-y)\, dy - c_t(x)
\int_0^{\infty}K(x,y) c_t(y)\,dy\\
&   + \int_{\Theta} \Big [\sum_{i=1}^{\infty}
\frac{1}{\theta_i}F\left(\tfrac{x}{\theta_i}\right)c_t\left(\tfrac{x}{\theta_i}\right) -
F(x)c_t(x) \Big ]\beta(d\theta). 
\label{forteq2}
\end{align}
The fragmentation part of the model was first introduced by Bertoin \cite{Bertoin} and takes into
account an infinite measure $\beta$ and a mechanism of dislocation with a possibly infinite number
of fragments.

The macroscopic scale version of this model (wich is intrinsecally stochastic) is studied in Cepeda
\cite{CepedaSto}. We believe that a \textit{hydrodynamical limit} result concerning this two
settings is possible to obtain in the following way. Denoting by $\mu^n = \frac{1}{n}
\sum_{i\geq1}\delta_{m_i}$ the empirical measure associated to the system composed by
$(m_1,m_2,\ldots)$, then the Coalescence-Fragmentation process associated $(\mu^n_t)_{t\geq0}$
converges to the solution to equation (\ref{forteq2}). For a first result concerning convergence in
the case where $F\equiv 0$ see Norris \cite{Norris,Norris2} and Cepeda-Fournier
\cite{Cepeda_Fournier} for a explicit rate of convergence. Nevertheless, this is not the aim of
these notes.

In this paper we are mainly interested in a result of general well-posedness, this means, with the
less possible assumptions on $K$, $F$, $\beta$ and the initial condition. Our method is based on the
use of the following distance: for $\lambda\in(0,1]$ and $c,d$ two positive Radon measures such
that $\int_{0}^{\infty} x^{\lambda} (c+d)(dx) < \infty$, we set
\begin{equation*}
d_{\lambda}(c,d) = \int_{0}^{\infty} x^{\lambda-1} \left|c((x,\infty)) - d((x,\infty))\right| dx.
\end{equation*}

In this paper we extend the result in Fournier-Lauren\c{c}ot \cite{Well-Pdnss} concerning only
coagulation, and we show existence and uniqueness to (\ref{forteq2}) for a class of homogeneous-like
coagulation kernels and bounded fragmentation kernels, in the class of measures having a finite
moment of order the degree of homogeneity of the coagulation kernel. Unfortunately this method does
not extend to unbounded fragmentation kernels. Our assumptions on $F$ are not very restrictive for
small masses, since we do not ask to $F$ to be zero on a neighbourhood of $0$. On the other hand, we
control the big masses imposing to the fragmentation kernel to be bounded near infinity.
Nevertheless, we are able to consider infinite total fragmentation rates for all $x>0$.

We have chosen this model for the fragmentation since it is actually more tractable mathematically,
see Bertoin \cite{Bertoin,Bertoin2} and Haas \cite{Haas,Haas2} where the properties of the only
fragmentation model are extensively studied. Kolokoltsov \cite{Kolokoltsov2} shows in the discrete
case a hydrodynamical limit result for a different model than ours, namely he introduces a mass
exchange Markov process. An extensive study of the methods used by the author are given in the books
\cite{Kolokoltsov_Book,Kolokoltsov_Book2}. Finally, we refer to Eibeck-Wagner \cite{Eibeck_Wagner2}
where a different model is studied which is used to approach general nonlinear kinetic equations.

The paper is organized as follows: we introduce some notation, definitions and the result in
Sections \ref{Notations} and the proof is given in Section \ref{Proofs}, we compare our result to
those known to us in Section \ref{OtherForms}.

\section[Notation and Definitions]{The Coagulation multi-Fragmentation equation.- Notation,
Definitions and Result}\label{Notations}
\setcounter{equation}{0}
We first give some notation and definitions. We consider the set of non-negative Radon measures
$\mathcal M^+$ and for $\lambda \in \mathbb R$ and $c\in \mathcal M^+$, we set
\begin{equation}\label{Intro:Momentl}
M_{\lambda}(c) := \int_0^{\infty} x^{\lambda} c(dx),\hspace*{1cm} \mathcal M^+_{\lambda}
=\left\lbrace c \in \mathcal M^+, \,\,M_{\lambda}(c) < \infty \right\rbrace.
\end{equation}

\noindent Next, for $\lambda \in (0,1]$ we introduce the space $\mathcal H_{\lambda}$ of test
functions,
\begin{equation*}
\mathcal H_{\lambda} = \Big \lbrace \phi \in \mathcal C([0,\infty)) \textrm{ such that } \phi(0) = 0
\textrm{ and } \sup_{x\neq y} \frac{|\phi(x) - \phi(y)|}{|x-y|^{\lambda}} < \infty  \Big \rbrace.
\end{equation*}
Note that $\mathcal C_c^1((0,\infty)) \subset H_{\lambda}$. 

Here and below, we use the notation $x\wedge y := \min\{x,y\}$ and $x\vee y := \max \{x,y\}$ for
$(x,y) \in (0,\infty)^2$.

\begin{hypothesis}[Coagulation and Fragmentation Kernels]\label{Def_Kernels}
Consider $\lambda \in (0,1]$ and a symmetric coagulation kernel $K: (0,\infty) \times (0,\infty)
\rightarrow [0,\infty)$ i.e., $K(x,y) = K(y,x)$. Assume that $K$ is locally Lipschitz, more
precisely assume that it belongs to $W^{1,\infty}((\varepsilon,1/\varepsilon)^2)$ for every
$\varepsilon >0$ and that it satisfies
\begin{align}
K(x,y) & \leq   \kappa_0(x+y)^{\lambda},\label{Hyp_K1} \\
(x^{\lambda}\wedge y^{\lambda}) |\partial_x K(x,y)| & \leq   \kappa_1 x^{\lambda-1}
y^{\lambda},\label{Hyp_K2}
\end{align}
for all $(x,y) \in (0,\infty)^2$ and for some positive constants $\kappa_0$ and $\kappa_1$. Consider
also a fragmentation kernel $F:(0,\infty) \rightarrow [0,\infty)$ and assume that $F$ belongs to
$W^{1,\infty}((\varepsilon,1/\varepsilon))$ for every $\varepsilon >0$ and that it satisfies
\begin{align}
F(x) & \leq   \kappa_2, \label{Hyp_F1}\\
|F'(x)| & \leq    \kappa_3\, x^{-1}, \label{Hyp_F2}
\end{align}
for $x \in (0,\infty)$ and some positive constants $\kappa_2$ and $\kappa_3$. 
\end{hypothesis}
For example, the coagulation kernels listed below, taken from the mathematical and physical
literature, satisfy Hypothesis \ref{Def_Kernels}.
\begin{align*} 
K(x,y) & =   (x^\alpha + y^\alpha)^\beta  \\
& \textrm{with}\,\,\,
\alpha\in(0,\infty),\,\,\beta\in(0,\infty) \,\,\,\textrm{and}\,\,\,\lambda = \alpha\beta \in(0,1],
\\
K(x,y) & =   x^\alpha y^\beta + x^\beta y^\alpha \\
 & \textrm{with}\,\,\, 0 \leq \alpha \leq \beta \leq
1 \,\,\,\textrm{and}\,\,\,\lambda = \alpha+\beta \in(0,1], \\
K(x,y) & =  (xy)^{\alpha/2} (x + y)^{-\beta} 
\\
 & \textrm{with}\,\,\, \alpha \in (0,1],\,\,\,
\beta\in[0,\infty) \,\,\,\textrm{and}\,\,\,\lambda = \alpha-\beta \in(0,1], \\
K(x,y) & =   (x^\alpha+y^\alpha)^{\beta} |x^{\gamma} - y^{\gamma}| 
\\
 & \textrm{with}\,\,\, \alpha \in
(0,\infty),\,\,\, \beta\in(0,\infty),\,\,\,\gamma\in(0,1] \,\,\,\textrm{and}\,\,\,\lambda =
\alpha\beta+\gamma \in(0,1], \\
K(x,y) & =   (x+y)^{\lambda} e^{-\beta(x+y)^{-\alpha}} \\
 & \textrm{with}\,\,\, \alpha \in
(0,\infty),\,\,\, \beta\in(0,\infty), \,\,\,\textrm{and}\,\,\,\lambda \in(0,1].
\end{align*}
On the other hand, the following fragmentation kernels satisfy Hypothesis \ref{Def_Kernels}.
 
$
F(x) \equiv  1, 
$
 all non-negative function $F\in C^2(0,\infty),$ bounded, convex and
non-increasing, 
 all non-negative function $   F\in C^2(0,\infty),$ bounded, concave and
non-decreasing.

  We define the set of ratios by
$ \Theta = \left\lbrace \theta = (\theta_k)_{k\geq 1} : 1 > \theta_1 \geq \theta_2
\geq \ldots \geq 0\, \right\rbrace.
$
  
\begin{hypothesis}[The $\beta$ measure]\label{Hyp_beta}
We consider on $\Theta$ a measure $\beta(\cdot)$ and assume that it satisfies 
\begin{align}
&
\beta 
\Big ( \sum_{k\geq 1} \theta_k > 1 \Big )  =   0, \label{Hyp1_Beta} \\
& 
C_{\beta}^{\lambda} := \int_{\Theta}  \Big [\sum_{k\geq 2} \theta_k^{\lambda} +
(1-\theta_1)^{\lambda} \Big ]\beta(d\theta)  < 
 \infty, \hspace{5mm} \textrm{for some } \lambda\in
(0,1]. \label{Hyp2_Beta}
\end{align}
\end{hypothesis}

\begin{remark}\label{on_beta} 
\rm 
i) The property (\ref{Hyp1_Beta}) means that there is no gain of mass due to the dislocation of a
particle. Nevertheless, it does not exclude a loss of mass due to the dislocation of the particles.

 ii)
Note that under (\ref{Hyp1_Beta}) we have $\sum_{k\geq 1} \theta_k - 1\leq 0$ $\beta$-a.e.,
and since $\theta_k \in [0,1)$ for all $k\geq 1$, $\theta_k \leq \theta_k^{\lambda}$, we have
\begin{equation}\label{inebeta}
\left\lbrace
\begin{array}{ll}
1-\theta_1^{\lambda} \leq 1-\theta_1 \leq (1-\theta_1)^{\lambda},\,\,   \beta-a.e.,\\[2mm] 
\sum_{k \geq 1} \theta_k^{\lambda} - 1 = \sum_{k \geq 2} \theta_k^{\lambda} - (1 -
\theta_1^{\lambda}) \leq \sum_{k \geq 2} \theta_k^{\lambda},\,\, \beta-a.e.
\end{array} \right.
\end{equation} 
implying the following bounds:
\begin{equation}\label{Clambdabounds}\left\lbrace
\begin{array}{c}
\displaystyle\int_{\Theta} (1-\theta_1) \beta(d\theta) \leq C_{\beta}^{\lambda}, \,\,\,\displaystyle
\int_{\Theta} 
 \Big [\sum_{k\geq 2} \theta_k^{\lambda} + (1-\theta_1^{\lambda})  \Big ]
\beta(d\theta) \leq C_{\beta}^{\lambda}, \\ [4mm]
\displaystyle \int_{\Theta}  \Big ( \sum_{k \geq 1} \theta_k^{\lambda} -1  \Big )^+\beta(d\theta)
\leq C_{\beta}^{\lambda}.
\end{array} \right. 
\end{equation}
We point out that 
$$
\int_{\Theta} \Big | \sum_{k \geq 1} \theta_k^{\lambda} -1  \Big |\beta(d\theta)
\leq 2 C_{\beta}^{\lambda}
$$
 but when the term $\sum_{k \geq 1} \theta_k^{\lambda} - 1$ is negative
our calculations can be realized in a simpler way. We will thus use the positive bound given in the
last inequality. 
\end{remark}

The result of the deterministic framework depends strongly on the use of the distance which is
defined for $\lambda\in (0,1]$ and $c,\,d \in\mathcal M_{\lambda}^+$ (recall \ref{Intro:Momentl})
the distance
\begin{equation}\label{Intro:DistanceCoag}
d_{\lambda}(c,d) = \int_{0}^{\infty} x^{\lambda-1} 
 \Big |\int_x^{\infty}  (c(dy) -
d(dy)  ) \Big | dx.
\end{equation}

\begin{definition}[Weak solution to (\ref{forteq2})]\label{Def_solution}
Let $c^{in} \in M^+_{\lambda}$. A family $(c_t)_{t\geq 0} \subset \mathcal M^+$ is a
$(c^{in},K,F,\beta,\lambda)$-weak solution to $(\ref{forteq2})$ if $c_0 = c^{in}$,
\begin{equation*}
t\mapsto \int_0^{\infty} \phi(x) c_t(dx) \,\, \textrm{ is differentiable on } [0,\infty)
\end{equation*}
for each $\phi \in \mathcal H_{\lambda}$, and for every $t \in [0,\infty)$,
\begin{equation}\label{Def_finitemoment}
\sup_{s\in[0,t]} M_{\lambda}(c_s) < \infty,
\end{equation}  
and for all $\phi \in \mathcal H_{\lambda}$
\begin{align} \label{weakeq2}
\frac{d}{d t} \int_0^{\infty}\phi(x) c_t(dx) & =   \frac{1}{2}\int_0^{\infty}\int_0^{\infty} K(x,y)
(A\phi)(x,y) c_t(dx)c_t(dy)\\
&    + \int_0^{\infty}F(x)\int_{\Theta} (B\phi)(\theta,x)  \beta(d\theta) c_t(dx), \nonumber 
\end{align}
where the functions $(A\phi):(0,\infty)\times (0,\infty) \rightarrow \mathbb R$ and $(B\phi): \Theta
\times (0,\infty) \rightarrow \mathbb R$ are defined by
\begin{align}
(A\phi)(x,y)  & =   \phi(x+y) - \phi(x) - \phi(y), \label{operatorA} \\
(B\phi)(\theta,x) & =  \sum_{i=1}^{\infty} \phi(\theta_i x) - \phi(x).\label{operatorB} 
\end{align}
\end{definition}

This equation can be split into two parts, the first integral explains the evolution in time of the
system under coagulation and the second integral explains the behaviour of the system when
undergoing fragmentation and it corresponds to a growth in the number of particles of masses
$\theta_1 x$, $\theta_2 x$, $\ldots$, and to a decrease in the number of particles of mass $x$ as a
consequence of their fragmentation.

According to (\ref{Hyp_K1}), (\ref{Hyp_F1}), Lemma \ref{lemma31}. below, (\ref{Def_finitemoment})
and (\ref{Hyp2_Beta}), the integrals in (\ref{weakeq2}) are absolutely convergent and bounded with
respect to $t\in[0,s]$ for every $s\geq0$.

The main result reads as follows.

\begin{theorem}\label{Theorem}
Consider $\lambda \in (0,1]$ and $c^{in} \in \mathcal M_{\lambda}^+$. Assume that the coagulation
kernel $K$, the fragmentation kernel $F$ and the measure $\beta$ satisfy Hypotheses
$\ref{Def_Kernels} $ and $\ref{Hyp_beta}$ with the same $\lambda$.

  Then, there exists a unique $(c^{in},K,F,\beta,\lambda)$-weak solution to 
  $(\ref{forteq2}).$
\end{theorem}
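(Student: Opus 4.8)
The plan is to follow the strategy of Fournier--Lauren\c cot \cite{Well-Pdnss} for the coagulation-only case, adapting it to incorporate the fragmentation term. The central object is a Wasserstein-type distance on $\mathcal M_\lambda^+$: for two measures $c, \tilde c$ with the same $\lambda$-moment one sets (roughly)
\[
d_\lambda(c,\tilde c) = \sup\Bigl\{ \Bigl| \int_0^\infty \phi\, dc - \int_0^\infty \phi\, d\tilde c \Bigr| : \phi \in \mathcal H_\lambda,\ \|\phi\|_{\mathrm{Lip}_\lambda} \le 1 \Bigr\},
\]
and one works on the complete metric space of measures with prescribed (finite) $\lambda$-moment equipped with this distance. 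For existence, I would first establish a priori bounds: testing \eqref{weakeq2} against $\phi(x) = x^\lambda$ (which lies in $\mathcal H_\lambda$ after the usual truncation/approximation argument), the coagulation contribution satisfies $(A\phi)(x,y) = (x+y)^\lambda - x^\lambda - y^\lambda \le 0$ since $\lambda \le 1$, while the fragmentation contribution is controlled by $F(x)\int_\Theta (\sum_i \theta_i^\lambda - 1)\, x^\lambda\,\beta(d\theta) \le \kappa_2 C_\beta^\lambda x^\lambda$ using \eqref{Hyp_F1} and \eqref{Hyp2_Beta}; hence $M_\lambda(c_t) \le M_\lambda(c^{in}) e^{\kappa_2 C_\beta^\lambda t/2}$ (or a similar Gronwall bound), which gives \eqref{Def_finitemoment} uniformly on compact time intervals.

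Next I would set up an iterative (Picard-type) or truncation scheme. Following \cite{Well-Pdnss}, one natural route is to truncate the kernels (e.g. restrict masses to $(\varepsilon, 1/\varepsilon)$, where $K \in W^{1,\infty}$ and $F \in W^{1,\infty}$ by Hypothesis \ref{Def_Kernels}), solve the resulting equation — which for bounded kernels is an ODE in a Banach space of measures and has a global solution by standard fixed-point arguments — obtain uniform-in-$\varepsilon$ moment bounds from the a priori estimate above, and then pass to the limit. The key estimate needed both for the limit passage and for uniqueness is a stability/contraction estimate in $d_\lambda$: for two solutions $(c_t)$, $(\tilde c_t)$ one wants
\[
\frac{d}{dt} d_\lambda(c_t, \tilde c_t) \le C\bigl(1 + M_\lambda(c_t) + M_\lambda(\tilde c_t)\bigr)\, d_\lambda(c_t, \tilde c_t),
\]
from which Gronwall gives uniqueness and the convergence of the approximating sequence (a Cauchy property in $d_\lambda$). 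To prove such an estimate one tests the difference of the two weak equations against an admissible $\phi$ and must bound, term by term, $\int\!\int K(x,y)[(A\phi)(x,y)\,d(c\otimes c - \tilde c\otimes\tilde c)]$ and $\int F(x)\int_\Theta (B\phi)(\theta,x)\,\beta(d\theta)\,d(c-\tilde c)$ by $d_\lambda(c,\tilde c)$ times moment factors; this uses the regularity hypotheses \eqref{Hyp_K2} and \eqref{Hyp_F2} to control how $K$ and $F$ vary, together with the bounds collected in \eqref{Clambdabounds} (in particular $\int_\Theta(1-\theta_1^\lambda)\beta(d\theta) \le C_\beta^\lambda$ and $\int_\Theta \sum_{k\ge 2}\theta_k^\lambda\,\beta(d\theta) = C_\beta^\lambda$) to handle the infinitely many fragments in $(B\phi)$.

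The main obstacle, as in all such arguments, is the contraction estimate for the coagulation term: the map $x \mapsto \phi(x+y) - \phi(x)$ need not be $\mathrm{Lip}_\lambda$ with a controlled constant uniformly in $y$, so one cannot simply feed $(A\phi)$ back into the definition of $d_\lambda$. This is exactly the delicate point resolved in \cite{Well-Pdnss} by exploiting the structure $K(x,y) \le \kappa_0(x+y)^\lambda$ and the derivative bound \eqref{Hyp_K2} — one splits $\iint K(A\phi)\,d(c\otimes c-\tilde c\otimes\tilde c)$ into a part where $c$ varies and a part where the other $c$ varies, and in each part rewrites the integrand as $\psi_y(x)\,d(c-\tilde c)(x)$ for a function $\psi_y \in \mathcal H_\lambda$ whose $\mathrm{Lip}_\lambda$-norm is bounded by $C(1+M_\lambda)$; the weight $x^\lambda\wedge y^\lambda$ in \eqref{Hyp_K2} is precisely what makes $K(x,y)(A\phi)(x,y)$, as a function of $x$, $\lambda$-H\"older with the right constant. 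The fragmentation term is comparatively benign: since $F$ is bounded and the ratios satisfy \eqref{Hyp1_Beta}, $(B\phi)(\theta, \cdot)$ is a (countable) superposition of rescalings of $\phi$, and $\sum_i \|\phi(\theta_i\,\cdot)\|_{\mathrm{Lip}_\lambda} \le \|\phi\|_{\mathrm{Lip}_\lambda}\sum_i \theta_i^\lambda \le \|\phi\|_{\mathrm{Lip}_\lambda}(1 + C_\beta^\lambda)$ is finite; combined with \eqref{Hyp_F2} for the $F$-variation, it contributes a term of the required form. Once existence is obtained via the truncation limit and \eqref{Def_finitemoment} holds, the differentiability of $t \mapsto \int \phi\, dc_t$ and the validity of \eqref{weakeq2} for all $\phi \in \mathcal H_\lambda$ follow from the bounds on the right-hand side noted after Definition \ref{Def_solution}, and uniqueness is immediate from the contraction estimate applied to two hypothetical solutions with the same datum.
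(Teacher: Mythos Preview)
Your overall plan---a priori $M_\lambda$-bound from $(A\phi)\le 0$ and $(B\phi)\le x^\lambda(\sum\theta_i^\lambda-1)$, truncation, and a Gronwall-type stability estimate in a $\lambda$-adapted distance---is exactly the paper's strategy, and the coagulation part is handled just as you say (this is \cite{Well-Pdnss}). The paper does not work with the dual formulation $\sup_{\phi}|\int\phi\,d(c-\tilde c)|$, however, but with its ``primal'' form
\[
\int_0^\infty x^{\lambda-1}\,|F^{c_t}(x)-F^{d_t}(x)|\,dx,\qquad F^c(x)=c((x,\infty)),
\]
and differentiates this directly via the explicit expression for $\partial_t F^{c_t}$; the role of the optimal test function is played by $R(t,x)=\int_0^x z^{\lambda-1}\,\mathrm{sign}(E(t,z))\,dz$ where $E=F^{c_t}-F^{d_t}$.

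The substantive gap in your sketch is the treatment of the fragmentation contribution. Your inequality $\sum_i\theta_i^\lambda\le 1+C_\beta^\lambda$ confuses a pointwise bound with the integrated quantity $C_\beta^\lambda=\int_\Theta\sum_{k\ge2}\theta_k^\lambda\,\beta(d\theta)$; for a fixed $\theta$, $\sum_i\theta_i^\lambda$ can be arbitrarily large (and $\beta(\Theta)$ may be infinite). Consequently the map $x\mapsto F(x)\int_\Theta(B\phi)(\theta,x)\,\beta(d\theta)$ is \emph{not}, in general, in $\mathcal H_\lambda$ with a norm controlled uniformly in $\phi$: the problematic piece is $\phi(\theta_1 x)-\phi(x)$, whose $\mathrm{Lip}_\lambda$ seminorm is of order $1+\theta_1^\lambda\sim 2$ and hence not $\beta$-integrable near $\theta_1=1$. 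The paper circumvents this by an integration by parts that converts $\int I^f(t,x)(c_t-d_t)(dx)$ into $\int\partial_xI^f(t,x)E(t,x)\,dx$; then $\partial_x R(t,\theta_i x)=\theta_i^{\lambda-1}x^{\lambda-1}\,\mathrm{sign}(E(t,\theta_i x))$, and the elementary inequality $\mathrm{sign}(E(t,\theta_i x))\,E(t,x)\le |E(t,x)|$ produces the factor $\sum_{i\ge1}\theta_i^\lambda-1$, which \emph{is} $\beta$-integrable by \eqref{Clambdabounds}. This sign trick, tied to the particular test function $R$, is what makes the stability estimate close; a generic $\phi\in\mathcal H_\lambda$ does not enjoy it. The hypothesis \eqref{Hyp_F2} enters only in the companion term $F'(x)\int_\Theta(BR)(\theta,x)\,\beta(d\theta)$ after integrating by parts, not in controlling an $\mathcal H_\lambda$-norm as you suggest.

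On the existence side your outline is right in spirit but omits two points the paper needs: positivity of the approximate solutions (obtained by an integrating-factor trick that removes the negative terms from \eqref{weakeq2}), and an intermediate $M_2$-bound (used when estimating the error from truncating $K$ to $K_n=K\wedge n$, which brings in $M_{2\lambda}\le M_\lambda+M_2$). The paper therefore proceeds in three stages: bounded $K$, finite $\beta$, finitely many fragments (Picard in $\mathcal M_0^+$); then general $K,\beta$ with $c^{in}\in\mathcal M_\lambda^+\cap\mathcal M_2^+$; then general $c^{in}\in\mathcal M_\lambda^+$.
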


It is important to note that the main interest of this result is that only one moment is asked to
the initial condition $c^{in}$. The assumptions on the coagulation kernel $K$ and the measure
$\beta$ are reasonable. Whereas the main limitation is that we need to assume that the fragmentation
kernel is bounded. It is also worth to point out that we have chosen to study this version of the
equation because of its easy physical intuition.

\section[Other formulations for Fragmentation]{Other formulations for the fragmentation
equation}\label{OtherForms}
\setcounter{equation}{0}
To enable us to compare our results to those obtained in other works, we discuss the relationships
between the various formulations. The first works (see \cite{Aldous,Laurencot,FournierGiet}) were
concentrated on the binary fragmentation where the particles dislocate only into two particles:

\smallskip

\noindent \textbf{Binary Model.}
Denoting $c_t(x)$ the concentration of particles of mass $x\in(0,\infty)$ at time $t$, the dynamics
of the fragmentation is given by the operator
$$
(\mathcal F_b c_t)(x)   =   \int_x^{\infty} F_b(x,y-x)c_t(y) dy - \frac{1}{2} c_t(x) \int_0^x
F_b(y,x-y) dy,
$$
for $(t,x)\in(0,\infty)^2$. The binary fragmentation kernel $F_b$ is also a symmetric function and
$F_b(x,y)$ is the rate of fragmentation of particles of mass $x + y$ into particles of masses $x$
and $y$.

Note that we can obtain the continuous coagulation 
binary-fragmentation equation, for example, by
considering $\beta$ with support in 
$\{\theta : \theta_1 + \theta_2 = 1 \}$ and $\beta(d\theta) =
h(\theta_1)\,d\theta_1 \delta_{\{ \theta_2 = 1-\theta_1\}}$,
 and setting $F_b(x,y) = \frac{2}{x+y}
F(x+y)h (\frac{x}{x+y} )$ where 
$h(\cdot)$ is a continuous function on $[0,1]$ and
symmetric at $1/2$. Under this framework,
 one can find some results of existence and uniqueness for
example in \cite{DubovskiStewart,StewartE,StewartU}.

\smallskip

\noindent
 \textbf{Multifragmentation Model.}
We can consider a version of the coagulation - multi fragmentation equation where the fragmentation
operator has the following representation; see \cite{Laurencot}:
\begin{equation*}
(\mathcal F_m c_t)(x) = \int_x^{\infty} F_m(y,x) c_t(y) dy - c_t(x)\int_0^x \frac{y}{x} F_m(x,y) dy,\end{equation*}
where $F_m(x,y)$ is the fragmentation kernel and explains the dislocation of a particle $x$ into
smaller particles $y$ and $x-y$. In the same spirit in
\cite{Banasiak,Banasiak_Lamb2,Banasiak_Lamb,Giri,GKW} is considered an equivalent representation of
the fragmentation operator
\begin{equation*}
(\mathcal F_m c_t)(x) = \int_0^x b(x,y) a(y) c_t(y) dy - a(x) c_t(x),
\end{equation*}
where $a(x) = \int_0^x \frac{y}{x} F_m(x,y) dy$ is the total rate of fragmentation of a particle of
mass $x$, and $b(x,y) = F_m(y,x)/a(y)$ is a non-negative function and represents the distribution
(probability) of particles of mass $x$ generated from particles of mass $y\geq x$. This operator
allows to consider a multi-fragmentation model in the following way, for each fragmentation of a
particle of mass $y$, the average number and mass of the fragments $x$ are, respectively
\begin{equation}\label{WeakAlternative}
N(y) = \int_0^y b(x,y) dx,\hspace{1cm} \textrm{and}\hspace{1cm} m(y) = \int_0^y x b(x,y) dy,
\end{equation}
and it is usually assumed that no mass is lost when a particle breaks up, that is, $\int_0^y x
b(x,y) dy = y$. In both the physics and mathematics literature, concerning the fragmentation
operator, particular attention has been paid to models with the following self-similar dynamic:
\begin{enumerate}
\item[$\bullet$] $S(x)=C x^\alpha$, for some constant $C>0$ and $\alpha\in\mathbb R$.
\item[$\bullet$] $b(x,y) = \frac{1}{x} h (\frac{y}{x} )$ with $\int_0^1 x h(x)dx = 1$.
\end{enumerate}
The main two reasons for this are that self-similar assumptions are relevant for applications and
that they are also more mathematically tractable. There is also a significant literature on
probabilistic models for the microscopic mechanism of fragmentation with a self-similar dynamic. We
refer to the book by Bertoin \cite{Bertoin_Book} for an overview and to \cite{EMR,Haas,Haas2}
for discussions of the relations between the probabilistic models and the above operator.

Remark that we express the rate of fragmentation of
 a particle of mass $x$ as the product
$F(x)\beta(d\theta)$. If we consider fragmentation
 kernels of the form $F_m(x,y) = F(x) \frac{1}{x} h (\frac{y}{x} )$, 
note that the rate of fragmentation of a particle of mass $x$ is $a(x) =
F(x)\int_0^1 h(\theta)d\theta$ which, under our assumptions, can be infinite for all $x$, and
denoting $\theta$ the fragments (\ref{WeakAlternative}) becomes
\begin{equation*}
(\mathcal F_m c_t)(x) = \int_0^1 
 \Big [\tfrac{1}{\theta^2} F\left(\tfrac{x}{\theta}\right)
c_t\left(\tfrac{x}{\theta}\right) - F(x) c_t(x) \Big ] h(\theta)d\theta.
\end{equation*}
Nevertheless, it is not clear the existence of a measure $h$ such that allow the identification 
\begin{equation*}
(\mathcal F_m c_t)(x) = \int_{\Theta}  \Big [\sum_{i=1}^{\infty}
\tfrac{1}{\theta_i}F\left(\tfrac{x}{\theta_i}\right)c_t\left(\tfrac{x}{\theta_i}\right) -
F(x)c_t(x) \Big ]\beta(d\theta),
 \end{equation*}
 which demands some properties to the measure $h$.  
 
On the one hand, one of the difficulties when working with the coagulation-fragmentation equation,
as stated in Banasiak-Lamb \cite{Banasiak_Lamb2}, is that the coagulation operator is not linear.
The authors used a compactness method, the method used constrains the authors (see
\cite{Laurencot,Giri,GKW}) to require some finite moments to the initial conditions, existence holds
in the functional set 
$$
X = \Big  \{f \in L^1(0,\infty): \int_0^{\infty} (1+x)|f(x)|dx<\infty  \Big \}
$$
 (and the
solutions are not measures), in \cite{Banasiak_Lamb} is required higher moments to treat different
fragmentation rates than those found in the other works. It is also needed to control the number of
fragments at each dislocation and $\beta$ must be integrable.

It is worth to point out that the method we use in this paper relies on a previous result on the
coagulation-only equation, which considers a particular well-adapted distance that allows to relax
the hypotheses on the initial condition. The coagulation-only ($F\equiv 0$) equation is known as
Smoluchowski's equation and it has been studied by several authors, Norris in \cite{Norris} gives
the first general well-posedness result and Fournier and Lauren\c cot \cite{Well-Pdnss} give a
result of existence and uniqueness of a measured-valued solution for a class of homogeneous-like
kernels. The fragmentation-only ($K\equiv 0$) equation has been studied in Bertoin \cite{Bertoin}
and Haas \cite{Haas}. In particular, in \cite{Bertoin} the self-similar fragmentations are
characterized using a fragmentation kernel of the type $F(x) = x^\alpha$ for $\alpha\in\mathbb R$
and where the particles may undergo multi-fragmentations.

The main aim of this paper is to extend this result to the case where fragmentation is added to the
process. We remark that the model is different and allows us to consider other features of the
fragmentation that previous models do not present. Namely, we allow the fragmentation to give an
infinity of fragments at each dislocation and the measure $\beta$ is not necessarily integrable. In
this sense, although we consider bounded fragmentation kernels, the total fragmentation rate can be
infinite for each $x\geq0$.

Roughly, in \cite{Laurencot}, an existence and uniqueness result is given for $K(x,y) =
r(x)r(y)+\alpha(x,y)$, where $\alpha\in\mathcal C([0,\infty)\times[0,\infty))$ is the dominant term
for the coagulation-fragmentation process since the kernel $F_m\in\mathcal
C([0,\infty)\times[0,\infty))$ is assumed to satisfy 
$$
F_m(x,y) \leq C(1+\max(x,r(x))) , \ \ \text{ for
$x,y\geq0$ and }
$$
$$
\int_0^x F_m(x,y) \leq \gamma(x) \max(x,r(x)) , 
\ \text{ for $x\geq0$ and
$\gamma:[0,\infty)\rightarrow[0,\infty)$}
$$
 with $\gamma(x)\underset{x\rightarrow
\infty}\longrightarrow0$.
In \cite{Giri,GKW} the coagulation kernel is assumed to satisfy 
$$
K(x,y) \leq C (1+x)^\mu (1+y)^\mu
$$
with $\mu\in[0,1)$, and $a(x) \leq C_1(1+x)^{a_1}$ and 
$$
\int_0^x (1+y)^{1+\nu} b(y,x) dy \leq
C_2(1+x)^{a_2} ,
$$
where $C_1$ and $C_2$ are positive constants and where $a_1+a_2\leq 1+\nu$ with
$1+\nu\in(0,1)$.
Finally, in \cite{Banasiak_Lamb} the authors consider $a(x)\leq C_1(1+x^\mu)$ and 
$$
\int_0^x
yb(y,x)dx\leq C_2 (1+x^\nu) ,
$$
 with $\mu,\nu \in [0,\infty)$ and where $C_1$ and $C_2$ are positive
constants. This result allows to consider stronger fragmentation rates requiring a stronger moment
for the initial condition.
 
\section{Proofs}\label{Proofs}
\setcounter{equation}{0}

We begin giving some properties of the operators $(A\phi)$ and $(B\phi)$ for $\phi \in \mathcal
H_{\lambda}$ which allow us to justify the weak formulation (\ref{weakeq2}).

\begin{lemma}\label{lemma31} Consider $\lambda \in (0,1]$, $\phi \in \mathcal H_{\lambda}$. Then
there exists $C_{\phi}$ depending on $\phi$, $\theta$ and $\lambda$ such that
$$
(x+y)^{\lambda} |(A\phi)(x,y)|  \leq   C_{\phi} (xy)^{\lambda}, \ \ \
|(B\phi)(\theta,x)|   \leq   C_{\phi} x^{\lambda} 
 \Big [\sum_{i\geq 2}\theta^{\lambda}_i +
(1-\theta_1)^{\lambda} \Big ],
$$
for all $(x,y)\in(0,\infty)^2$  and for all $\theta \in \Theta$.
\end{lemma}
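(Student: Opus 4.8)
The plan is to estimate $(A\phi)$ and $(B\phi)$ separately using only the $\lambda$-Hölder regularity of $\phi$ together with the normalization $\phi(0)=0$. Throughout, write $L_\phi := \sup_{x\neq y}|\phi(x)-\phi(y)|/|x-y|^\lambda < \infty$; note that $\phi(0)=0$ gives the pointwise bound $|\phi(x)| \leq L_\phi x^\lambda$ for all $x>0$.

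\emph{The $(A\phi)$ bound.} Here $(A\phi)(x,y) = \phi(x+y)-\phi(x)-\phi(y)$. By symmetry in $(x,y)$ I may assume $x \leq y$. I would write $(A\phi)(x,y) = \bigl(\phi(x+y)-\phi(y)\bigr) - \phi(x)$ and bound the first difference by $L_\phi x^\lambda$ (Hölder applied to the pair $x+y$, $y$, whose difference is $x$) and $|\phi(x)| \leq L_\phi x^\lambda$ as above, giving $|(A\phi)(x,y)| \leq 2 L_\phi x^\lambda = 2 L_\phi (x\wedge y)^\lambda$. It then remains to check the algebraic inequality $(x+y)^\lambda (x\wedge y)^\lambda \leq C (xy)^\lambda$, i.e. $(x+y)^\lambda \leq C (x\vee y)^\lambda$, which holds with $C = 2^\lambda \leq 2$ since $x+y \leq 2(x\vee y)$. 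So $(x+y)^\lambda|(A\phi)(x,y)| \leq 2^{1+\lambda} L_\phi (xy)^\lambda$.

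\emph{The $(B\phi)$ bound.} Here $(B\phi)(\theta,x) = \sum_{i\geq 1}\phi(\theta_i x) - \phi(x) = \bigl(\phi(\theta_1 x)-\phi(x)\bigr) + \sum_{i\geq 2}\phi(\theta_i x)$. For the first term, Hölder gives $|\phi(\theta_1 x)-\phi(x)| \leq L_\phi (1-\theta_1)^\lambda x^\lambda$, and by Remark \ref{on_beta} (inequality (\ref{inebeta})) one has $(1-\theta_1)^\lambda \leq \sum_{k\geq 2}\theta_k^\lambda$ $\beta$-a.e. For the tail, $|\phi(\theta_i x)| \leq L_\phi \theta_i^\lambda x^\lambda$, so $\sum_{i\geq 2}|\phi(\theta_i x)| \leq L_\phi x^\lambda \sum_{i\geq 2}\theta_i^\lambda$. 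Adding, $|(B\phi)(\theta,x)| \leq 2 L_\phi x^\lambda \sum_{i\geq 2}\theta_i^\lambda$. Taking $C_\phi = 2^{1+\lambda} L_\phi$ (or the max of the two constants) covers both estimates.

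\emph{Main obstacle.} There is no real obstacle; the argument is entirely elementary. The only point requiring a little care is the tail term in $(B\phi)$: one must not try to bound $\sum_{i\geq 2}\phi(\theta_i x)$ by treating it as a single difference, but rather bound each term individually by $|\phi(\theta_i x)| \leq L_\phi \theta_i^\lambda x^\lambda$ and then sum, which converges precisely because of $\lambda \leq 1$ and (ultimately, after integrating against $\beta$) hypothesis (\ref{Hyp2_Beta}). The subadditivity $(a+b)^\lambda \leq a^\lambda + b^\lambda$ for $\lambda\in(0,1]$, used implicitly in Remark \ref{on_beta}, is the one structural fact that makes the $(1-\theta_1)^\lambda$ term controllable by the $\beta$-integrable quantity $\sum_{k\geq 2}\theta_k^\lambda$.
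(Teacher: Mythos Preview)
Your proof is correct and follows essentially the same route as the paper. For $(B\phi)$ your decomposition into the $i=1$ difference plus the tail, with the bound $(1-\theta_1)^\lambda \leq \sum_{k\geq 2}\theta_k^\lambda$ via (\ref{inebeta}), is exactly what the paper does; for $(A\phi)$ the paper simply cites \cite[Lemma~3.1]{Well-Pdnss}, whereas you supply the explicit elementary argument $|(A\phi)(x,y)|\leq 2L_\phi(x\wedge y)^\lambda$ together with $(x+y)^\lambda\leq 2^\lambda(x\vee y)^\lambda$, which is precisely the standard computation behind that reference.
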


\noindent
{\bf Prof of Lemma \ref{lemma31}.}
For $(A\phi)$ we recall \cite[Lemma 3.1]{Well-Pdnss}. Next, consider $\lambda \in (0,1]$ and $\phi
\in \mathcal H_{\lambda}$ then, since $\phi(0) = 0$,
\begin{align*}
|(B\phi)(\theta,x)| & \leq   \left|\phi(\theta_1 x) - \phi(x)\right| + \sum_{i\geq 2}\left|
\phi(\theta_i x) - \phi(0)\right| \\
& \leq   C_{\phi} x^{\lambda}(1-\theta_1 )^{\lambda} + C_{\phi} x^{\lambda}\sum_{i\geq 2}
\theta_i^{\lambda}.
\tag*{\qed}
\end{align*}

We are going to work with a distance between solutions depending on $\lambda$. The distance
$d_{\lambda}$ (\ref{Intro:DistanceCoag}) involves the primitives of the solution of (\ref{forteq2}),
thus we recall \cite[Lemma 3.2]{Well-Pdnss}.
\begin{lemma}\label{lemma32} 
For $c\in \mathcal M^+$ and $x\in (0,\infty)$, we put
\begin{equation}\label{def_Fc}
F^c(x) := \int_0^{\infty} \mathds 1_{(x,\infty)}(y)\, c(dy), 
\end{equation}

\noindent If $c\in \mathcal M^+_{\lambda}$ for some $\lambda \in (0,1]$, then
\begin{equation*}
\int_0^{\infty} x^{\lambda-1}F^c(x)\,dx = M_{\lambda}(c)/\lambda,\hspace*{1cm} \lim_{x\rightarrow 0}
x^{\lambda} F^c(x) = \lim_{x\rightarrow \infty} x^{\lambda} F^c(x) =0,
\end{equation*}
and $F^c \in L^{\infty}(\varepsilon,\infty)$ for each $\varepsilon >0$.
\end{lemma}

We give now a very important inequality on which the existence and uniqueness proof relies.

\begin{proposition}\label{Prop_Uniqueness}
Consider $\lambda \in (0,1]$, a coagulation kernel $K$, 
a fragmentation kernel $F$ and a measure
$\beta$ on $\Theta$ satisfying Hypotheses
 $\ref{Def_Kernels} $ and $\ref{Hyp_beta}$ with the same
$\lambda$. Let $c^{in}$ and $d^{in} \in \mathcal M^+_{\lambda}$ and denote by
$(c_t)_{t\in[0,\infty)}$ a $(c^{in},K,F,\beta,\lambda)$-weak solution to 
$(\ref{weakeq2})$ and by
$(d_t)_{t\in[0,\infty)}$ a $(d^{in},K,F,\beta,\lambda)$-weak solution to 
$(\ref{weakeq2}).$ In
addition, we put $E(t,x) = F^{c_t}(x) - F^{d_t}(x)$, $\rho(x) = x^{\lambda-1}$ and
\begin{equation*}
R(t,x) = \int_0^{x} \rho(z) sign(E(t,z))\,dz \,\,\, \textrm{for}\,\,
(t,x)\in[0,\infty)\times(0,\infty).
\end{equation*}
  Then, for each $t\in [0,\infty)$, $R(t,\cdot) \in \mathcal H_{\lambda}$ and $($recall
$(\ref{def_Fc})$ and $(\ref{Intro:DistanceCoag}))$
\begin{align}
\label{Prop_ineq}
\dfrac{d}{dt}d_{\lambda}(c_t,d_t) & =   \dfrac{d}{dt}\int_0^{\infty} x^{\lambda-1} |E(t,x)|
dx\nonumber \\
& \leq   \frac{1}{2} \int_0^{\infty} \int_0^{\infty} K(x,y) \left[\rho(x+y) - \rho(x)
\right](c_t+d_t)(dy) |E(t,x)|\,dx\nonumber\\
&   +  \frac{1}{2} \int_0^{\infty} \int_0^{\infty} \partial_x K(x,y) \left(AR(t)\right)(x,y)
(c_t+d_t)(dy) E(t,x)\,dx \nonumber\\
& +  \int_0^{\infty} F'(x) \int_{\Theta} (BR(t))(\theta,x)\beta(d\theta) E(t,x) dx \nonumber \\
&  + \int_0^{\infty}F(x) x^{\lambda-1} |E(t, x)| \int_{\Theta} 
 \Big ( \sum_{i\geq 1}
\theta_i^{\lambda} - 1 \Big )\beta(d\theta)dx.
\end{align}
\end{proposition}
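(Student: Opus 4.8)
The plan is to differentiate the quantity $\int_0^\infty x^{\lambda-1}|E(t,x)|\,dx$ directly, using the weak formulation \eqref{weakeq2} for both $c_t$ and $d_t$, and then rearrange the resulting expression into the four terms displayed in \eqref{Prop_ineq}. The first observation is that $R(t,\cdot)\in\mathcal H_\lambda$: since $|\rho(z)\,\mathrm{sign}(E(t,z))|=z^{\lambda-1}$, for $x<y$ we have $|R(t,y)-R(t,x)|\le\int_x^y z^{\lambda-1}\,dz=\lambda^{-1}(y^\lambda-x^\lambda)\le\lambda^{-1}(y-x)^\lambda$, and $R(t,0)=0$; so $R(t,\cdot)$ is a legitimate test function and \eqref{weakeq2} applies to it. Moreover, by Lemma \ref{lemma32} and Fubini,
\[
\int_0^\infty x^{\lambda-1}|E(t,x)|\,dx=\int_0^\infty x^{\lambda-1}\,\mathrm{sign}(E(t,x))\,\bigl(F^{c_t}(x)-F^{d_t}(x)\bigr)\,dx=\int_0^\infty R(t,x)\,(c_t-d_t)(dx),
\]
the last equality coming from integrating by parts / swapping the order of integration in $\int_0^\infty x^{\lambda-1}\mathrm{sign}(E)\int_{(x,\infty)}(c_t-d_t)(dy)\,dx$, which is exactly $\int_0^\infty\bigl(\int_0^y z^{\lambda-1}\mathrm{sign}(E(t,z))\,dz\bigr)(c_t-d_t)(dy)$. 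This rewriting is the crucial bridge: it expresses the Wasserstein-type functional as a pairing of the measures against a test function in $\mathcal H_\lambda$.

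\emph{Next I would} differentiate. Writing $\Phi(t):=\int_0^\infty R(t,x)(c_t-d_t)(dx)$, there are two contributions to $\frac{d}{dt}\Phi(t)$: one from the $t$-dependence of the measures and one from the $t$-dependence of $R$ through $\mathrm{sign}(E(t,\cdot))$. For the first, apply \eqref{weakeq2} with $\phi=R(t,\cdot)$ to $c_t$ and to $d_t$ and subtract; this produces a coagulation term $\tfrac12\int\int K(x,y)(AR(t))(x,y)[c_t\otimes c_t-d_t\otimes d_t](dx\,dy)$ and a fragmentation term $\int F(x)\int_\Theta(BR(t))(\theta,x)\beta(d\theta)(c_t-d_t)(dx)$. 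For the second contribution, one argues that on the set where $E(t,x)\ne0$ the sign is locally constant in $t$ (a.e.\ $x$, for a.e.\ $t$), so the derivative of $R$ in $t$ integrated against $(c_t-d_t)$ vanishes — this is the standard ``differentiating an absolute value'' step and I would justify it exactly as in \cite{Well-Pdnss}, where the same functional is used for pure coagulation.

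\emph{Then} comes the symmetrization and linearization that turns quadratic-in-measure differences into something bounded by $|E|$. For the coagulation term, write $c_t\otimes c_t-d_t\otimes d_t=\tfrac12(c_t-d_t)\otimes(c_t+d_t)+\tfrac12(c_t+d_t)\otimes(c_t-d_t)$; using symmetry of $K$ and of $(A\phi)$, the coagulation contribution becomes $\tfrac12\int\int K(x,y)(AR(t))(x,y)(c_t+d_t)(dy)(c_t-d_t)(dx)$. Now integrate by parts in $x$ against the ``cumulative'' variable: since $\partial_x[(AR(t))(x,y)]=\rho(x+y)\,\mathrm{sign}(E(t,x+y))-\rho(x)\,\mathrm{sign}(E(t,x))$ wherever the signs are defined, and $(c_t-d_t)(dx)=-\partial_x F^{c_t-d_t}(x)\,dx=-\partial_x E(t,x)\,dx$ in the distributional sense, an integration by parts moves the derivative onto $K(x,y)(AR(t))(x,y)$ and produces $E(t,x)\,dx$. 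Splitting $\partial_x[K(x,y)(AR(t))(x,y)]=\partial_xK(x,y)\,(AR(t))(x,y)+K(x,y)[\rho(x+y)\mathrm{sign}(E(t,x+y))-\rho(x)\mathrm{sign}(E(t,x))]$ and bounding $\mathrm{sign}(E(t,x+y))\le 1$ while keeping $\mathrm{sign}(E(t,x))E(t,x)=|E(t,x)|$ gives exactly the first two lines of \eqref{Prop_ineq} (the inequality appearing because we drop the favorable sign in the $\rho(x+y)\mathrm{sign}(E(t,x+y))E(t,x)\le\rho(x+y)|E(t,x)|$ estimate). The fragmentation term is handled in the same spirit: integrate by parts in $x$ against $E(t,x)\,dx$, splitting $\partial_x[F(x)(BR(t))(\theta,x)]=F'(x)(BR(t))(\theta,x)+F(x)\partial_x(BR(t))(\theta,x)$ with $\partial_x(BR(t))(\theta,x)=\sum_{i\ge1}\theta_i\rho(\theta_i x)\mathrm{sign}(E(t,\theta_i x))-\rho(x)\mathrm{sign}(E(t,x))$; using $\theta_i\rho(\theta_i x)=\theta_i^\lambda x^{\lambda-1}$ and bounding each $\mathrm{sign}(E)$ term by $1$ in absolute value, the piece $F(x)x^{\lambda-1}|E(t,x)|\int_\Theta(\sum_{i\ge1}\theta_i^\lambda-1)\beta(d\theta)$ emerges together with the $F'$-term, giving the last two lines. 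Throughout, Lemma \ref{lemma31}, Lemma \ref{lemma32}, \eqref{Hyp_K1}, \eqref{Hyp_F1} and \eqref{Hyp2_Beta} guarantee that every integral converges absolutely and that the boundary terms in the integrations by parts vanish (this is where $\lim_{x\to0,\infty}x^\lambda F^c(x)=0$ is used).

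\emph{The main obstacle} I expect is making the integration-by-parts steps rigorous at the level of measures rather than smooth densities: $c_t$ and $d_t$ are only Radon measures, so $\partial_xE(t,x)$ must be interpreted distributionally, and one must verify that $R(t,\cdot)$, $(AR(t))(\cdot,y)$ and $(BR(t))(\theta,\cdot)$ are absolutely continuous in $x$ with the claimed a.e.\ derivatives (they are, being primitives of $L^1_{loc}$ functions) and that the products with $K$ and $F$ — which are only $W^{1,\infty}_{loc}$ by Hypothesis \ref{Def_Kernels} — remain in $W^{1,1}_{loc}$ so that the Leibniz rule and the fundamental theorem of calculus apply. A secondary delicate point is the a.e.-in-$t$ justification that the time-derivative of $\mathrm{sign}(E(t,\cdot))$ contributes nothing; one typically reduces to this by a Lebesgue-point / dominated-convergence argument on the difference quotient of $\Phi$, exactly as in the coagulation-only case treated in \cite{Well-Pdnss}, and I would invoke that argument here with the fragmentation term added, checking that the extra term is also dominated uniformly via Lemma \ref{lemma31} and \eqref{Def_finitemoment}.
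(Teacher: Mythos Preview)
Your outline is essentially the paper's strategy: rewrite $\int x^{\lambda-1}|E|\,dx$ as a pairing of $c_t-d_t$ against $R(t,\cdot)\in\mathcal H_\lambda$, differentiate, symmetrize the coagulation bilinear form, and integrate by parts in $x$ using $(c_t-d_t)(dx)=-dE(t,x)$, splitting the Leibniz derivative and bounding the sign terms. The paper organizes the time-differentiation slightly differently---it first derives an explicit formula for $\partial_t F^{c_t}$ (Lemma \ref{lemma35}) and uses the Sobolev chain rule $\tfrac{d}{dt}|E|=\mathrm{sign}(E)\,\partial_tE$ in $W^{1,\infty}(0,s;L^1(x^{\lambda-1}dx))$, then Fubini brings it to the same intermediate identity $\tfrac{d}{dt}\int x^{\lambda-1}|E|=\tfrac12\int I^c(c_t-d_t)+\int I^f(c_t-d_t)$---so the delicate ``$\partial_t R$ contributes nothing'' step you flag is absorbed into a standard Bochner-space chain rule rather than argued directly.

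There is one genuine technical point you do not isolate. The fragmentation integration by parts requires differentiating $I^f(t,x)=F(x)\int_\Theta (BR(t))(\theta,x)\,\beta(d\theta)$ in $x$, but under Hypothesis \ref{Hyp_beta} the measure $\beta$ need not be finite. The naive bound on $\partial_x(BR(t))(\theta,x)$ is $x^{\lambda-1}\bigl(\sum_i\theta_i^\lambda+1\bigr)$, whose $\beta$-integral is controlled by $C_\beta^\lambda+2\beta(\Theta)$; when $\beta(\Theta)=\infty$ this fails and $I^f$ is not a priori in $W^{1,1}_{loc}$, so the Leibniz rule and the boundary-term argument you sketch are not directly available. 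The paper handles this by restricting to $\Theta_\varepsilon=\{\theta:\theta_1\le1-\varepsilon\}$, on which $\beta$ is finite (since $\beta(\Theta_\varepsilon)\le\varepsilon^{-1}C_\beta^\lambda$), carrying out the integration by parts for $I^f_\varepsilon$, and then letting $\varepsilon\to0$; the favorable sign estimate $\sum_i\theta_i^\lambda\,\mathrm{sign}(E(t,\theta_ix))\mathrm{sign}(E(t,x))-1\le\sum_i\theta_i^\lambda-1$ survives the limit because its right-hand side is $\beta$-integrable by \eqref{Clambdabounds}. Your generic remark that the products remain in $W^{1,1}_{loc}$ is exactly where this issue hides; everything else in your plan matches the paper.
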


Note that it is straightforward that under the notation and assumptions of Proposition
\ref{Prop_Uniqueness}., from (\ref{Hyp_F1}), (\ref{Hyp_F2}), (\ref{Clambdabounds}) and using Lemma
\ref{Aux_ineq}. below, there exists a positive constant $C_1$ depending on $\lambda$, $\kappa_0$ and
$\kappa_1$ and a positive constant $C_2$ depending on $\kappa_2$, $\kappa_3$ and
$C^{\lambda}_{\beta}$ such that for each $t \in [0,\infty)$,
\begin{equation}\label{unicity}
\dfrac{d}{dt}d_{\lambda}(c_t,d_t) \leq \left(C_1 M_{\lambda}(c_t+d_t)+ C_2\right)
d_{\lambda}(c_t,d_t).
\end{equation}
Before to give the proof of Proposition \ref{Prop_Uniqueness}., we state two auxiliary results. In
Lemma \ref{Aux_ineq}. are given some inequalities which are useful to verify that the integrals on
the right-hand side of (\ref{Prop_ineq}) are convergent, and in Lemma \ref{lemma35}. we study the
time differentiability of $E$.

\begin{lemma}\label{Aux_ineq}
Under the notation and assumptions of Proposition $\ref{Prop_Uniqueness} ,$
 there exists a positive
constant $C$ such that for $(t,x,y) \in [0,\infty)\times (0,\infty)^2$,
\begin{eqnarray}
K(x,y)\left|\rho(x+y) - \rho(x)\right| & \leq & C x^{\lambda-1}y^{\lambda}, \nonumber \\
K(x,y)\left|\left(AR(t)\right)(x,y)\right| & \leq & C x^{\lambda}y^{\lambda}, \nonumber\\
\left|\partial_x K(x,y)\left(AR(t)\right)(x,y)\right| & \leq & C x^{\lambda-1}y^{\lambda},
\nonumber\\
\int_{\Theta}\left| (BR(t))(\theta,x)\right|\beta(d\theta) & \leq & C C^{\lambda}_{\beta}
x^{\lambda}.\label{AuxIf}
\end{eqnarray}
\end{lemma}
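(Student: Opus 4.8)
The plan is to establish the four inequalities of Lemma~\ref{Aux_ineq}. one at a time, using Hypotheses~\ref{Def_Kernels}. and~\ref{Hyp_beta}., Lemma~\ref{lemma31}., and the elementary observation that the functions $\rho(x)=x^{\lambda-1}$, $R(t,\cdot)$, and their ``$A$'' and ``$B$'' transforms all satisfy H\"older-type bounds that match the homogeneity scales dictated by the kernels.

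First I would record that $R(t,\cdot)\in\mathcal H_\lambda$ with a H\"older constant depending only on $\lambda$ (independent of $t$): indeed $|R(t,x)-R(t,y)|\le\bigl|\int_{x\wedge y}^{x\vee y}z^{\lambda-1}\,dz\bigr|=\lambda^{-1}|x^\lambda-y^\lambda|\le\lambda^{-1}|x-y|^\lambda$, since $\mathrm{sign}(E)$ is bounded by $1$ in modulus and $z\mapsto z^\lambda$ is $\lambda$-H\"older on $(0,\infty)$. This is exactly the claim already announced in Proposition~\ref{Prop_Uniqueness}. and it lets me invoke Lemma~\ref{lemma31}. with $\phi=R(t,\cdot)$ and a constant $C_\phi$ that does not depend on $t$. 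For the first inequality, I would combine the coagulation bound \eqref{Hyp_K1}, namely $K(x,y)\le\kappa_0(x+y)^\lambda$, with the pointwise estimate $|\rho(x+y)-\rho(x)|\le C(x+y)^{-\lambda}\,(xy)^{\lambda}\,x^{-1}$-type scaling; more concretely, since $0<\lambda\le1$ one has $|(x+y)^{\lambda-1}-x^{\lambda-1}|\le x^{\lambda-1}$ and also $\le(1-\lambda)x^{\lambda-2}y$ for $y\le x$ by the mean value theorem, and a short case split ($y\le x$ versus $y>x$) gives $|\rho(x+y)-\rho(x)|\le C\,x^{-1}(x\wedge y)^{1-\lambda}\,y^{\lambda-1}\cdot$(suitable power); multiplying by $(x+y)^\lambda\le C(x^\lambda+y^\lambda)$ and simplifying yields the stated $Cx^{\lambda-1}y^\lambda$. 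The second and third inequalities follow from Lemma~\ref{lemma31}. applied to $R(t,\cdot)$: $(x+y)^\lambda|(AR(t))(x,y)|\le C_\phi(xy)^\lambda$ gives $|(AR(t))(x,y)|\le C_\phi(xy)^\lambda(x+y)^{-\lambda}$, whence $K(x,y)|(AR(t))(x,y)|\le\kappa_0(x+y)^\lambda\cdot C_\phi(xy)^\lambda(x+y)^{-\lambda}=C(xy)^\lambda$ for the second, and multiplying instead by $|\partial_xK(x,y)|$ and using \eqref{Hyp_K2} in the form $|\partial_xK(x,y)|\le\kappa_1 x^{\lambda-1}y^\lambda(x^\lambda\wedge y^\lambda)^{-1}$ together with $(x+y)^{-\lambda}\le(x^\lambda\vee y^\lambda)^{-1}$ and $(xy)^\lambda=(x^\lambda\wedge y^\lambda)(x^\lambda\vee y^\lambda)$ produces the $Cx^{\lambda-1}y^\lambda$ bound for the third after canceling the $(x^\lambda\wedge y^\lambda)$ and $(x^\lambda\vee y^\lambda)$ factors. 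The last inequality is immediate from the second estimate of Lemma~\ref{lemma31}. applied to $\phi=R(t,\cdot)$: $|(BR(t))(\theta,x)|\le C_\phi x^\lambda\sum_{i\ge2}\theta_i^\lambda$, and integrating against $\beta$ and invoking the definition \eqref{Hyp2_Beta} of $C_\beta^\lambda$ gives $\int_\Theta|(BR(t))(\theta,x)|\beta(d\theta)\le C C_\beta^\lambda x^\lambda$.

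The main obstacle I anticipate is the careful bookkeeping in the first inequality: one must exploit the cancellation in $\rho(x+y)-\rho(x)$ (since $\rho$ alone is not integrable near $0$ against the measures at hand) and match it precisely against the $\kappa_0(x+y)^\lambda$ growth of $K$, paying attention to the two regimes $y\lesssim x$ and $y\gtrsim x$; the inequality $|a^{\lambda-1}-b^{\lambda-1}|\le|a-b|\,(a\wedge b)^{\lambda-2}\wedge\bigl(a^{\lambda-1}+b^{\lambda-1}\bigr)$ for $0<\lambda\le1$ is the key elementary fact, and combining it with the trivial bound $b\le a+b$ so that $(x+y)^\lambda$ absorbs the negative power of $x\wedge y$ requires a touch of care but no new idea. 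Everything else reduces to substituting Lemma~\ref{lemma31}., Hypothesis~\ref{Def_Kernels}., and Hypothesis~\ref{Hyp_beta}. and simplifying products of powers; the $t$-uniformity of all constants is automatic because $R(t,\cdot)$ has a $t$-independent H\"older seminorm.
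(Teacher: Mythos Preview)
Your proof is correct and follows the same strategy as the paper. The paper is even more terse: it cites \cite[Lemma 3.4]{Well-Pdnss} for the first three inequalities (recording along the way the bound $|(AR(t))(x,y)|\le\tfrac{2}{\lambda}(x\wedge y)^\lambda$, which streamlines the third estimate), and for the fourth it carries out directly the integral computation underlying the $B$-bound of Lemma~\ref{lemma31}., so your invocation of that lemma is exactly equivalent.
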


\noindent
{\bf Proof.}
\noindent The first three inequalities were proved in \cite[Lemma 3.4]{Well-Pdnss}. In particular,
recall that
\begin{equation}\label{R_inequality}
\left|\left(AR(t)\right)(x,y)\right|  \leq  \tfrac{2}{\lambda} (x\wedge y)^{\lambda}, 
\end{equation}
for $(t,x,y) \in [0,\infty)\times (0,\infty)^2$. Next, using (\ref{Clambdabounds}) we deduce 
\begin{align*}
&
\int_{\Theta}  | (BR(t))   (\theta,x) |\beta(d\theta)
   =   \Big |\int_{\Theta} \Big [
\sum_{i\geq 1} R(t,\theta_i x) - R(t,x) \Big ] \beta(d\theta) \Big | 
\\ & =   \int_{\Theta}
 \Big | \sum_{i\geq 2}\int_0^{\theta_i x}\partial_x R(t,z)dz - \int_{\theta_1 x}^{x}\partial_x R(t,z)
dz \Big |\beta(d\theta)\\
& \leq  \int_{\Theta}  \Big [ \sum_{i\geq 2}\int_0^{\theta_i x}z^{\lambda - 1} dz + \int_{\theta_1
x}^{x}z^{\lambda - 1} dz \Big ]\beta(d\theta) \leq  \frac{1}{\lambda} C^{\lambda}_{\beta}
x^\lambda.
\tag*{\qed}
\end{align*}

\begin{lemma}\label{lemma35}
Consider $\lambda \in (0,1]$, a coagulation kernel $K$, a fragmentation kernel $F$ and a measure
$\beta$ on $\Theta$ satisfying the Hypotheses $\ref{Def_Kernels}$
 with the same $\lambda$. Let
$c^{in} \in \mathcal M^+_{\lambda}$ and denote by $(c_t)_{t\in[0,\infty)}$ a
$(c^{in},K,F,\beta,\lambda)$-weak solution to (\ref{weakeq2}). Then
$(x,t)\mapsto \partial_t F^{c_t}(x)$ belongs to $L^{\infty}(0,s;L^1(0,\infty;x^{\lambda-1}dx))$, for
each $s \in [0,\infty)$.
\end{lemma}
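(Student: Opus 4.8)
The plan is to write $\partial_t F^{c_t}(x)$ explicitly by differentiating the weak formulation (\ref{weakeq2}) with respect to a suitable family of test functions, and then to estimate the resulting expression in the $L^1(x^{\lambda-1}dx)$-norm uniformly on $[0,s]$. The natural test function here is (an approximation of) $\phi_x(y) = \mathds 1_{(x,\infty)}(y)$, which is not in $\mathcal H_\lambda$ because it is discontinuous; so first I would regularize it, replacing $\mathds 1_{(x,\infty)}$ by a Lipschitz ramp $\phi_x^\varepsilon$ equal to $0$ on $[0,x]$, equal to $1$ on $[x+\varepsilon,\infty)$, and affine in between. For fixed $x>0$ and $\varepsilon$ small, $\phi_x^\varepsilon\in\mathcal C_c^1$-like (bounded Lipschitz, vanishing at $0$), hence in $\mathcal H_\lambda$, and (\ref{weakeq2}) applies. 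Letting $\varepsilon\to 0$, using the moment bound (\ref{Def_finitemoment}), Lemma \ref{lemma31}, and dominated convergence, I would obtain
\[
\partial_t F^{c_t}(x) = \frac12\int_0^\infty\int_0^\infty K(y,z)\,(A\mathds 1_{(x,\infty)})(y,z)\,c_t(dy)c_t(dz) + \int_0^\infty F(y)\int_\Theta (B\mathds 1_{(x,\infty)})(\theta,y)\,\beta(d\theta)\,c_t(dy),
\]
where $(A\mathds 1_{(x,\infty)})(y,z) = \mathds 1_{\{y+z>x\}} - \mathds 1_{\{y>x\}} - \mathds 1_{\{z>x\}}$ and $(B\mathds 1_{(x,\infty)})(\theta,y) = \sum_{i\ge1}\mathds 1_{\{\theta_i y>x\}} - \mathds 1_{\{y>x\}}$. (This is the measure-valued analogue of the fact that the primitive of a solution of the strong equation solves an integrated equation; the computation for the coagulation term is exactly as in \cite[Lemma 3.3 / proof of Lemma 3.5]{Well-Pdnss}.)

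Next I would bound $\int_0^\infty x^{\lambda-1}|\partial_t F^{c_t}(x)|\,dx$ by splitting into the coagulation and fragmentation contributions. For the coagulation term one uses the pointwise bound $|(A\mathds 1_{(x,\infty)})(y,z)| \le \mathds 1_{\{y\wedge z \le x < y\vee z\}} + \mathds 1_{\{x < y\wedge z\}}$ together with $K(y,z)\le\kappa_0(y+z)^\lambda$; integrating $x^{\lambda-1}$ over the relevant $x$-interval and using Fubini gives a bound of the form $C\,M_\lambda(c_t)^2$ or $C\,M_\lambda(c_t)M_0$-type expressions that are controlled by $M_\lambda(c_t)^2$ (the homogeneity is arranged precisely so that $\int x^{\lambda-1}$ against the indicator reproduces powers $y^\lambda, z^\lambda$); this is again the estimate already carried out in \cite{Well-Pdnss}. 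For the fragmentation term, using $F\le\kappa_2$ and integrating first in $x$: for a single dislocation ratio $\theta_i$ and particle mass $y$, $\int_0^\infty x^{\lambda-1}\bigl|\mathds 1_{\{\theta_i y>x\}} - \mathds 1_{\{y>x\}}\bigr|\,dx = \lambda^{-1}(y^\lambda - (\theta_i y)^\lambda) = \lambda^{-1} y^\lambda(1-\theta_i^\lambda)$ for $i=1$ and $\int_0^\infty x^{\lambda-1}\mathds 1_{\{\theta_i y>x\}}\,dx = \lambda^{-1}(\theta_i y)^\lambda$ for $i\ge 2$; summing and using (\ref{inebeta}) bounds the whole fragmentation contribution by $\kappa_2\lambda^{-1}\bigl(\int_\Theta(1-\theta_1^\lambda)\beta(d\theta) + C_\beta^\lambda\bigr)M_\lambda(c_t) \le C\,C_\beta^\lambda M_\lambda(c_t)$, invoking (\ref{Clambdabounds}) and (\ref{Hyp2_Beta}).

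Putting the two estimates together gives, for a.e. $t\in[0,s]$,
\[
\int_0^\infty x^{\lambda-1}\bigl|\partial_t F^{c_t}(x)\bigr|\,dx \le C\bigl(M_\lambda(c_t)^2 + M_\lambda(c_t)\bigr),
\]
and since $\sup_{t\in[0,s]}M_\lambda(c_t)<\infty$ by (\ref{Def_finitemoment}), the right-hand side is bounded uniformly in $t\in[0,s]$, which is exactly the claimed membership in $L^\infty(0,s;L^1(0,\infty;x^{\lambda-1}dx))$. The main obstacle I anticipate is the first step, namely justifying rigorously that one may test the weak equation against the discontinuous indicator $\mathds 1_{(x,\infty)}$ — i.e. controlling the limit $\varepsilon\to 0$ of $\phi_x^\varepsilon$ uniformly enough (in $x$ and $t$) to land the resulting identity in the right function space, and checking that $(x,t)\mapsto\partial_t F^{c_t}(x)$ is genuinely (jointly) measurable so that the $L^\infty_t L^1_x$ statement makes sense; the actual kernel estimates are routine and largely inherited from \cite{Well-Pdnss}.
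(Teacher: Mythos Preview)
Your proposal is correct and arrives at the same explicit formula for $\partial_t F^{c_t}(x)$ and the same $L^1(x^{\lambda-1}dx)$ estimate as the paper (coagulation bounded by $C\,M_\lambda(c_t)^2$, fragmentation by $C\,C_\beta^\lambda M_\lambda(c_t)$). The only substantive difference is in how you justify the formula: you regularize $\mathds 1_{(x,\infty)}$ by Lipschitz ramps $\phi_x^\varepsilon$ and pass to the limit, whereas the paper avoids this step entirely by a duality trick. It takes $\vartheta\in\mathcal C_c((0,\infty))$, sets $\phi(x)=\int_0^x\vartheta(y)\,dy\in\mathcal H_\lambda$, and uses the integration-by-parts identity $\int_0^\infty \vartheta(x)F^{c_t}(x)\,dx = \int_0^\infty \phi(x)\,c_t(dx)$ (Lemma \ref{lemma32}). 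Applying (\ref{weakeq2}) to $\phi$ and Fubini then reads off $\partial_t F^{c_t}(z)$ directly as a distributional identity in $z$, never testing against a discontinuous function. This neatly sidesteps exactly the obstacle you flag in your last paragraph (the $\varepsilon\to0$ limit and joint measurability), so you may prefer to adopt it; the subsequent estimates are identical to yours.
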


\begin{proof}[\bf Proof.]
Following the same ideas as in \cite{Well-Pdnss}, we consider $\vartheta \in \mathcal C([0,\infty))$
with compact support in $(0,\infty)$, we put
\begin{equation*}
\phi(x) = \int_0^x \vartheta(y)\,dy,\hspace*{5mm} \textrm{for}\,\, x\in(0,\infty),
\end{equation*}
this function belongs to $\mathcal H_{\lambda}$. First, performing an integration by parts and using
Lemma \ref{lemma32}. we obtain
\begin{eqnarray*}
\int_0^{\infty} \vartheta(x) F^{c_t}(x)\,dx & = & \int_0^{\infty} \phi(x) c_t(dx).
\end{eqnarray*}
Next, on the one hand recall that in  \cite[eq. (3.7)]{Well-Pdnss} was proved that
\begin{align*}
&  \int_0^{\infty} \int_0^{\infty} K(x,y)\left(A\phi\right)(x,y)  \,c_t(dy)\,c_t(dx) dz \\
&\qquad  = \int_0^{\infty} \vartheta(z) \int_0^z \int_0^z \mathds 1_{[z,\infty)}(x+y)
K(x,y) c_t(dy)\,c_t(dx) dz \\
& \qquad - \int_0^{\infty} \vartheta(z) \int_z^{\infty} \int_z^{\infty} K(x,y)
c_t(dy)\,c_t(dx) dz.
\end{align*}
On the other hand, using the Fubini Theorem, we have
\begin{align*}
& \int_0^{\infty} F(x) \int_{\Theta} \left(B\phi\right)(\theta,x)\beta(d\theta)c_t(dx) \\ 
& \qquad  = 
\int_0^{\infty} F(x) \int_{\Theta} 
 \Big[ \sum_{i\geq 1} \int_0^{\theta_i x}
\vartheta(z) dz - \int_{0}^{x}\vartheta(z)dz  \Big] \beta(d\theta)\,c_t(dx)\\
&\qquad =  \int_0^{\infty} \vartheta(z) \int_{\Theta} 
 \Big [ \sum_{i\geq 1}
\int_{z/\theta_i}^{\infty} F(x) c_t(dx) - \int_{z}^{\infty} F(x) c_t(dx) 
\Big]\beta(d\theta)\,dz.
\end{align*}
Thus, from (\ref{weakeq2}) we infer that 
\begin{align*}
& 
\frac{d}{dt} \! \int_0^{\infty}  \!  \! \vartheta(x) F^{c_t}(x) dx  \!  =  \! 
  \frac{1}{2} \! \int_0^{\infty} \!  \! 
\vartheta(z) \int_0^z \int_0^z \mathds 1_{[z,\infty)}(x+y) K(x,y) c_t(dy)\,c_t(dx) dz \\
&\qquad  - \frac{1}{2}\int_0^{\infty} \vartheta(z) \int_z^{\infty} \int_z^{\infty} K(x,y)
c_t(dy)\,c_t(dx) dz \\
&\qquad   + \int_0^{\infty} \vartheta(z) \int_{\Theta} 
 \Big [ \sum_{i\geq 1} \int_{z/\theta_i}^{\infty}
F(x) c_t(dx) - \int_{z}^{\infty} F(x) c_t(dx)  \Big ]\beta(d\theta)\,dz,
\end{align*}
whence 
\begin{align}\label{weakD_F}
& 
\partial_t F^{c_t}(z)  =  \frac{1}{2}\int_0^z \int_0^z \mathds 1_{[z,\infty)}(x+y) K(x,y)
c_t(dy)\,c_t(dx) \nonumber \\
& - \frac{1}{2} \int_z^{\infty} \int_z^{\infty} K(x,y) c_t(dy)\,c_t(dx)\nonumber \\
&  + \int_{\Theta} \Big[ \sum_{i\geq 1} \int_{z/\theta_i}^{\infty} F(x) c_t(dx)\beta(d\theta)-
\int_{z}^{\infty} F(x) c_t(dx)\Big]\beta(d\theta),
\end{align}
for $(t,z)\in[0,\infty)\times (0,\infty)$. First, in \cite[Lemma 3.5]{Well-Pdnss} it was shown that,\begin{align*}
& \int_0^{\infty}z^{\lambda-1}\Big|\frac{1}{2}\int_0^z \int_0^z \mathds 1_{[z,\infty)}(x+y) K(x,y)
c_t(dy)\,c_t(dx) \\
& \qquad\qquad\qquad- \frac{1}{2} \int_z^{\infty} \int_z^{\infty} K(x,y) c_t(dy)\,c_t(dx) \Big|dz \, \leq \, \frac{2\kappa_0}{\lambda} M_{\lambda}(c_t)^2.
\end{align*}
Thus, from (\ref{Hyp_F1}) and the Fubini Theorem follows that, for each $t\in[0,\infty)$,
\begin{align*}
& \int_0^{\infty} z^{\lambda - 1} |\partial F^{c_t}(z)| dz  \leq  \frac{2\kappa_0}{\lambda}
M_{\lambda}(c_t)^2 \\
& +\int_0^{\infty}z^{\lambda - 1} \Big| \int_{\Theta}\Big(\sum_{i\geq 2}
\int_{z/\theta_i}^{\infty}F(x) c_t(dx) - \int_{z}^{z/\theta_1}F(x) c_t(dx)
\Big)\Big|\beta(d\theta)\,dz\\
&\leq  \frac{2\kappa_0}{\lambda} M_{\lambda}(c_t)^2 + \kappa_2 \int_{\Theta}\int_0^{\infty}\Big[
\sum_{i\geq 2}\int_0^{\theta_i x}z^{\lambda - 1} dz + \int_{\theta_1 x}^{x}z^{\lambda - 1}
\Big] c_t(dx) \beta(d\theta)\\
&\leq  \frac{2\kappa_0}{\lambda} M_{\lambda}(c_t)^2 + \frac{\kappa_2}{\lambda} M_{\lambda}(c_t)
\Big[ \int_{\Theta} \Big(\sum_{i\geq 2} \theta^{\lambda}_i + (1-\theta^{\lambda}_1)
\Big)\beta(d\theta) \Big]\\
&\leq  \frac{2\kappa_0}{\lambda} M_{\lambda}(c_t)^2 + \frac{C_{\beta}^{\lambda} \kappa_2}{\lambda}
M_{\lambda}(c_t),
\end{align*}
where we have used (\ref{Hyp2_Beta}). Finally, since the right-hand side of the above inequality is
bounded on $[0,t]$ for all $t>0$ by (\ref{Def_finitemoment}), we obtain the expected result.
\end{proof} 

\begin{proof}[Proof of Proposition \ref{Prop_Uniqueness}] 
Let $t\in [0,\infty)$. We first note that, since $s\mapsto M_{\lambda}(c_s)$ and $s\mapsto
M_{\lambda}(d_s)$ are in $L^{\infty}(0,t)$ by (\ref{Def_finitemoment}), it follows from Lemmas
\ref{lemma32}. and \ref{Aux_ineq}. that the integrals in (\ref{Prop_ineq}) are absolutely
convergent. Furthermore, for $t\geq0$ and $x>y$, we have
\begin{align*}
|R(t,x) - R(t,y)| &=  \Big|\int_y^x z^{\lambda-1} sign(E(t,z))\,dz \Big|\\
& \leq  \frac{1}{\lambda}(x^\lambda - y^\lambda) = \frac{1}{\lambda}\left((x-y+y)^\lambda -
y^\lambda\right) \\
&\leq  \frac{1}{\lambda} (x-y)^\lambda,
\end{align*}
since $\lambda \in(0,1]$. Thus $R(t,\cdot) \in \mathcal H_{\lambda}$ for each $t\in[0,\infty)$. 

\noindent Next, by Lemmas \ref{lemma32} and \ref{lemma35}, $E \in
W^{1,\infty}(0,s;L^1(0,\infty;x^{\lambda-1}dx))$ for every $s\in (0,T)$, so that
\begin{align*}
\dfrac{d}{dt}\int_0^{\infty} x^{\lambda-1} |E(t,x)| dx & =  \int_0^{\infty}x^{\lambda-1}
sign(E(t,x))\, \partial_t E(t,x) \,dx\\
& =  \int_0^{\infty} \partial_x R(t,x) \big( \partial_t F^{c_t}(x) - \partial_t F^{d_t}(x) \big)
dx.
\end{align*} 
We use (\ref{weakD_F}) to obtain
\begin{align}\label{DifferentialE}
&\dfrac{d}{dt}\int_0^{\infty} x^{\lambda-1} |E(t,x)| dx  \\
&= \frac{1}{2}\int_0^{\infty} \partial_x R(t,z) \int_0^z \int_0^z \mathds
1_{[z,\infty)}(x+y) K(x,y) (c_t(dy)\,c_t(dx)\nonumber \\
&\hspace{9cm}-d_t(dy)\,d_t(dx))dz\nonumber \\
&-\frac{1}{2}\int_0^{\infty} \partial_x R(t,z)\int_z^{\infty} \int_z^{\infty}
K(x,y)(c_t(dy)\,c_t(dx)-d_t(dy)\,d_t(dx))dz\nonumber\\
&+\int_0^{\infty} \partial_x R(t,z)\int_{\Theta}\Big[\sum_{i\geq 1}
\int_{z/\theta_i}^{\infty} F(x) (c_t-d_t)(dx)\\
&\hspace{6cm}-\int_{z}^{\infty} F(x)
(c_t-d_t)(dx)\Big]\beta(d\theta)dz.\nonumber
\end{align}

Recalling \cite[eq. (3.8)]{Well-Pdnss} and using the Fubini Theorem we obtain 
\begin{align}\label{EqIcIf}
& \dfrac{d}{dt}\int_0^{\infty} x^{\lambda-1} |E(t,x)| dx  \\
& \qquad =  \frac{1}{2} \int_0^{\infty} I^c(t,x)
\left(c_t - d_t\right)(dx) + \int_0^{\infty} I^f(t,x) \left(c_t - d_t\right)(dx),\nonumber
\end{align}
where
\begin{align*}
& I^c(t,x)  =  \int_0^{\infty}K(x,y) (AR(t))(x,y)(c_t + d_t)(dy),\hspace{1cm} x\in(0,\infty)\\
& I^f(t,x)  =  F(x) \int_{\Theta} (BR(t))(\theta,x) \beta(d\theta),\hspace{2.7cm} x\in(0,\infty).\\
\end{align*}

It follows from (\ref{AuxIf}) with (\ref{Hyp_F1}) that
\begin{align}\label{If_bouded}
|I^f(t,x)| \leq C\, x^{\lambda}, \,\,\,x\in(0,\infty),\,\,\,t\in[0,\infty).
\end{align}

We would like to be able to perform an integration by parts in the second integral of the right hand
of (\ref{EqIcIf}). However, $I^f$ is not necessarily differentiable with respect to $x$. We thus fix
$\varepsilon \in (0,1)$ and put
\begin{align*}
I_{\varepsilon}^f(t,x) = F(x)\int_{\Theta} (BR(t))(\theta,x) \beta_{\varepsilon}(d\theta),
\hspace{1cm} x\in(0,\infty),
\end{align*}
where $\beta_{\varepsilon}$ is the finite measure $\beta |_{\Theta_{\varepsilon}}$ with
$\Theta_{\varepsilon} = \{ \theta\in\Theta : \theta_1 \leq 1 -\varepsilon\}$ and note that
\begin{align}\label{Thetan_finite1}
\beta_{\varepsilon}(\Theta) = \int_{\Theta} \mathds 1_{\{1-\theta_1 \geq \varepsilon\}} \,
\beta(d\theta) \leq \frac{1}{\varepsilon} \int_{\Theta} (1-\theta_1) \, \beta(d\theta)
\leq\frac{1}{\varepsilon} \, C^{\lambda}_{\beta} < \infty.
\end{align}

\noindent Since $F$ belongs to $W^{1,\infty}(\alpha,1/\alpha)$ for $\alpha \in (0,1)$ and
$|R(t,x)|\leq x^{\lambda}/\lambda$ and $|\partial_x R(t,x)|\leq x^{\lambda-1}$ we deduce that
$I^f_{\varepsilon} \in W^{1,\infty}(\alpha,1/\alpha)$ for $\alpha \in (0,1)$ with
\begin{align}\label{Ifderive}
& \partial_x I^f_{\varepsilon}(t,x) = F'(x) \int_{\Theta}
(BR(t))(\theta,x)\beta_{\varepsilon}(d\theta) \\
&\qquad \qquad + F(x) \int_{\Theta} \Big[ \sum_{i\geq
1}\theta_i\partial_x R(t,\theta_i x)- \partial_x R(t,x)\Big]\beta_{\varepsilon}(d\theta).\nonumber
\end{align}

\noindent We now perform an integration by parts to obtain
\begin{align}
& \int_0^{\infty} I^f(t,x) ( c_t - d_t)(dx)  =  \int_0^{\infty} \big(I^f -
I_{\varepsilon}^f \big)(t,x)( c_t - d_t )(dx) \label{IfIPP}\\
&\qquad\qquad  - \big [I_{\varepsilon}^f(t,x)
E(t,x)\big ]_{x=0}^{x=\infty} + \int_0^{\infty} \partial_x I_{\varepsilon}^f(t,x) E(t,x)dx. \nonumber
\end{align} 

\noindent First, we have 
\begin{align*}
& \Big| \int_0^{\infty} \big(I^f - I_{\varepsilon}^f\big)(t,x)( c_t - d_t)(dx)
\Big| \leq \int_0^{\infty} \big|\big(I^f - I_{\varepsilon}^f\big)(t,x)\big|\left( c_t +
d_t\right)(dx) \\
& \leq\kappa_2 \int_0^{\infty} \int_{\Theta} \left|(BR(t))(\theta,x)\right|
(\beta-\beta_{\varepsilon})(d\theta)(c_t+d_t)(dx)\\
& \leq \kappa_2 \int_0^{\infty} \int_{\Theta} \Big [ \sum_{i\geq 2}\int_0^{\theta_i
x}z^{\lambda - 1} dz + \int_{\theta_1 x}^{x}z^{\lambda - 1} dz\Big] \mathds 1_{\{1-\theta_1 <
\varepsilon\}}\beta(d\theta)(c_t+d_t)(dx)\\
& \leq \frac{\kappa_2}{\lambda} \int_0^{\infty}x^\lambda\int_{\Theta}
\Big[\sum_{i\geq 2}\theta_i^{\lambda}+(1-\theta_1)^{\lambda}\Big] \mathds 1_{\{1-\theta_1 <
\varepsilon\}}\beta(d\theta)(c_t+d_t)(dx) \\
& = \frac{\kappa_2}{\lambda} M_{\lambda}(c_t+d_t) \int_{\Theta} \Big[\sum_{i\geq
2}\theta_i^{\lambda}+(1-\theta_1)^{\lambda}\Big] \mathds 1_{\{1-\theta_1 <
\varepsilon\}}\beta(d\theta),
\end{align*}
whence, recalling (\ref{Hyp2_Beta})  
\begin{align}\label{AuxIPP1}
\lim_{\varepsilon \rightarrow 0} \int_0^{\infty} \big(I^f - I_{\varepsilon}^f\big)(t,x)\left( c_t
- d_t\right)(dx) = 0.
\end{align}
Next, it follows from (\ref{If_bouded}) that
\begin{align*}
|I_{\varepsilon}^f(t,x) E(t,x)| \leq C x^{\lambda} \big(F^{c_t}(x) + F^{d_t}(x) \big),
\,\,\,x\in(0,\infty),\,\,\,t\in[0,\infty),
\end{align*}
we can thus easily conclude by Lemma \ref{lemma32}. that
\begin{align}\label{AuxIPP2}
\lim_{x\rightarrow 0} I_{\varepsilon}^f(t,x) E(t,x) = \lim_{x\rightarrow \infty}
I_{\varepsilon}^f(t,x) E(t,x) = 0.
\end{align}
Finally, (\ref{Hyp_F2}), Lemma \ref{lemma32}. and (\ref{AuxIf}) imply that 
\begin{align}\label{AuxIPP3}
& \lim_{\varepsilon \rightarrow 0} \int_0^{\infty} F'(x) \int_{\Theta}
(BR(t))(\theta,x)\beta_{\varepsilon}(d\theta)
E(t,x) dx   \\
&\qquad\qquad =\int_0^{\infty} F'(x) \int_{\Theta} (BR(t))(\theta,x)\beta(d\theta) E(t,x) dx, \nonumber
\end{align}
while 
\begin{align}
&\limsup_{\varepsilon \rightarrow 0} \int_0^{\infty} F(x) \int_{\Theta} \Big[ \sum_{i\geq
1}\theta_i\partial_x R(t,\theta_i x)- \partial_x R(t,x)\Big]\beta_{\varepsilon}(d\theta) E(t,x) dx
 \nonumber\\
&= \limsup_{\varepsilon \rightarrow 0}\int_0^{\infty} F(x)\int_{\Theta} \Big[ \sum_{i\geq 1}
\theta_i^{\lambda} x^{\lambda-1} sign(E(t,\theta_i x)) - x^{\lambda-1} sign(E(t,
x))\Big ] \nonumber \\
& \hspace{9cm} \times \beta_{\varepsilon}(d\theta)E(t,x) dx\nonumber\\
& = \limsup_{\varepsilon \rightarrow 0}\int_0^{\infty} F(x) x^{\lambda-1} sign(E(t, x))E(t, x) \nonumber\\
& \qquad\qquad \times \int_{\Theta} \Big [ \sum_{i\geq 1} \theta_i^{\lambda} sign\big(E(t,\theta_i x)E(t, x) \big) - 1
\Big ]\beta_{\varepsilon}(d\theta)dx \nonumber\\
& \leq \limsup_{\varepsilon \rightarrow 0}\int_0^{\infty}F(x) x^{\lambda-1} |E(t, x)| \int_{\Theta} \Big( \sum_{i\geq 1} \theta_i^{\lambda} - 1 \Big )\beta_{\varepsilon}(d\theta)dx\nonumber \\
& = \int_0^{\infty}F(x) x^{\lambda-1} |E(t, x)| \int_{\Theta} \Big[ \sum_{i\geq 1} \theta_i^{\lambda} - 1 \Big]\beta(d\theta)dx. \label{AuxIPP4}
\end{align}
We have used (\ref{Clambdabounds}) and (\ref{Hyp2_Beta}). Note that we are only interested in an
upper bound, when the term $ \sum_{i\geq 1} \theta_i^{\lambda} - 1$ is negative, $0$ would be a
better bound for the last term.

Recall (\ref{EqIcIf}), the term involving $I^c$ was treated in \cite[Proposition 3.3]{Well-Pdnss},
while from (\ref{IfIPP}) with (\ref{AuxIPP1}), (\ref{AuxIPP2}), (\ref{AuxIPP3}) and (\ref{AuxIPP4})
we deduce the inequality (\ref{Prop_ineq}), which completes the proof of Proposition
\ref{Prop_Uniqueness}.
\end{proof}

\subsection{Proof of Theorem \ref{Theorem}.} 
\begin{proof}[Uniqueness]
Owing to (\ref{Def_finitemoment}) and (\ref{unicity}), the uniqueness assertion of Theorem
\ref{Theorem}. readily follows from the Gronwall Lemma.
\end{proof}
\begin{proof}[Existence]
The proof of the existence assertion of Theorem \ref{Theorem}. is split into three steps. The first
step consists in finding an approximation to the coagulation-fragmentation equation by a version of
(\ref{weakeq2}) with finite operators: we will show existence in the set of positive measures with
finite total variation, i.e. $\mathcal M^+_0$, using the Picard method.

Next, we will show existence of a weak solution to (\ref{forteq2}) with an initial condition
$c^{in}$ in $\mathcal M_{\lambda}^+ \cap \mathcal M_2^+$, the final step consists in extending this
result to the case where $c^{in}$ belongs only to $\mathcal M_{\lambda}^+$.

\noindent \textbf{Bounded Case : existence and uniqueness in $\mathcal M^+_0$.-}  

We consider a bounded coagulation kernel and a fragmentation mechanism which gives only a finite
number of fragments. This is
\begin{equation}\label{Hypfinite}\left\lbrace
\begin{array}{rccr}
K(x,y) & \leq & \overline K,&\textrm{ for some } \overline K\in \mathbb R^+\\
F(x) & \leq & \overline F,&\textrm{ for some } \overline F\in \mathbb R^+\\
\beta(\Theta) & < & \infty, & \\
\beta(\Theta\setminus \Theta_k) & = & 0,&\textrm{ for some } k\in\mathbb N,
\end{array} \right.
\end{equation}
where 
\begin{equation*}
\Theta_k = \left\lbrace \theta = (\theta_n)_{n \geq 1} \in \Theta :\, \theta_{k+1} =
\theta_{k+2}=\ldots=0\right\rbrace.
\end{equation*}
We will show in this paragraph that under this assumptions there exists a global weak-solution to
(\ref{forteq2}). We will use the notation $\|\cdot\|_{\infty}$ for the $\sup$ norm on
$L^\infty[0,\infty)$ and $\|\cdot\|_{VT}$ for the total variation norm on measures. The result reads
as follows.

\begin{proposition}\label{existence_fini}
Consider $\mu^{in} \in\mathcal M^+_0$. Assume that the coagulation and fragmentation kernels $K$ and
$F$ and the measure $\beta$ satisfy the assumptions (\ref{Hypfinite}). Then, there exists a unique
non-negative weak-solution $(\mu_t)_{t\geq0}$ starting at $\mu_0 = \mu^{in}$ to (\ref{forteq2}).
Furthermore, it satisfies for all $t\geq 0$,
\begin{equation}\label{mu_bound1}
\sup_{[0,t]} \| \mu_s\|_{VT} \leq C_t \,\| \mu^{in}\|_{VT}, 
\end{equation} 
where $C_t$ is a positive constant depending on $t$, $\overline{K}$, $\overline{F}$ and $\beta$.
\end{proposition}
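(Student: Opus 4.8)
The strategy is a Picard (Banach fixed point) argument in the space of finite signed measures equipped with the total variation norm, written in Duhamel (variation of constants) form so that non-negativity is automatic. For $\mu\in\mathcal M^+_0$ introduce the loss rate $\ell^\mu(x):=\int_0^\infty K(x,y)\,\mu(dy)+\beta(\Theta)\,F(x)$ and the non-negative \emph{gain measure} $\mathcal G(\mu)$, defined for bounded continuous $\phi$ with $\phi(0)=0$ by
\[
\langle\mathcal G(\mu),\phi\rangle=\tfrac12\int_0^\infty\int_0^\infty K(x,y)\,\phi(x+y)\,\mu(dx)\mu(dy)+\int_0^\infty F(x)\int_\Theta\sum_{i\ge 1}\phi(\theta_i x)\,\beta(d\theta)\,\mu(dx)
\]
(under (\ref{Hypfinite}) the inner sum has at most $k$ non-zero terms, so $\mathcal G(\mu)$ is a genuine finite measure). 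Assumption (\ref{Hypfinite}) then yields, uniformly in $x$, $0\le\ell^\mu(x)\le\overline K\,\|\mu\|_{VT}+\overline F\,\beta(\Theta)$ and $|\ell^\mu(x)-\ell^\nu(x)|\le\overline K\,\|\mu-\nu\|_{VT}$, and, using the bilinear-plus-linear structure of $\mathcal G$, the bounds $\|\mathcal G(\mu)\|_{VT}\le\tfrac12\overline K\,\|\mu\|_{VT}^2+k\,\overline F\,\beta(\Theta)\,\|\mu\|_{VT}$ and $\|\mathcal G(\mu)-\mathcal G(\nu)\|_{VT}\le\big(\tfrac12\overline K(\|\mu\|_{VT}+\|\nu\|_{VT})+k\,\overline F\,\beta(\Theta)\big)\|\mu-\nu\|_{VT}$.

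For $R,T>0$ let $\mathcal E_{T,R}$ be the complete metric space of continuous paths $(\mu_s)_{s\in[0,T]}$ in $\mathcal M^+_0$ with $\mu_0=\mu^{in}$ and $\sup_{[0,T]}\|\mu_s\|_{VT}\le R$, metrised by $\sup_{[0,T]}\|\mu_s-\nu_s\|_{VT}$, and define $\Gamma$ on it by
\[
\langle\Gamma(\mu)_t,\phi\rangle=\int_0^\infty e^{-\int_0^t\ell^{\mu_s}(x)\,ds}\phi(x)\,\mu^{in}(dx)+\int_0^t\Big\langle\mathcal G(\mu_s),\,e^{-\int_s^t\ell^{\mu_r}(\cdot)\,dr}\,\phi\Big\rangle ds .
\]
Since $\mu^{in}\ge0$, $\mathcal G(\mu_s)\ge0$ and the exponential weights lie in $(0,1]$, $\Gamma(\mu)_t$ is a non-negative finite measure, and testing against $\phi\equiv1$ gives $\|\Gamma(\mu)_t\|_{VT}\le\|\mu^{in}\|_{VT}+T\big(\tfrac12\overline K R^2+k\,\overline F\,\beta(\Theta)R\big)$; choosing $R=2\|\mu^{in}\|_{VT}$ (the case $\mu^{in}=0$ is trivial) and then $T$ small, $\Gamma$ maps $\mathcal E_{T,R}$ into itself. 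Splitting $\langle\Gamma(\mu)_t-\Gamma(\nu)_t,\phi\rangle$ into the contribution of the difference of gain measures and that of the difference of exponential weights, and using $|e^{-a}-e^{-b}|\le|a-b|$ together with the uniform-in-$x$ bounds above, one obtains $\sup_{[0,T]}\|\Gamma(\mu)_s-\Gamma(\nu)_s\|_{VT}\le C(R,\overline K,\overline F,\beta,k)\,T\,\sup_{[0,T]}\|\mu_s-\nu_s\|_{VT}$; shrinking $T$ further, $\Gamma$ is a contraction, with a unique fixed point $(\mu_t)_{t\in[0,T]}\subset\mathcal M^+_0$. Differentiating the fixed-point relation in $t$ — the Duhamel weight contributes $-\langle\mu_t,\ell^{\mu_t}\phi\rangle$ and the Leibniz term contributes $\langle\mathcal G(\mu_t),\phi\rangle$ — and using the symmetry of $K$ shows that $t\mapsto\langle\mu_t,\phi\rangle$ is $C^1$ with derivative equal to the right-hand side of (\ref{weakeq2}); thus $(\mu_t)$ is a local weak solution, and conversely the variation-of-constants formula shows that any non-negative weak solution satisfies the same Duhamel relation, hence coincides with the fixed point on a short interval.

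To propagate this in time and obtain (\ref{mu_bound1}), test any non-negative weak solution against $\phi_n(x):=(nx)\wedge 1\in\mathcal H_\lambda$, which is subadditive, bounded by $1$, and increases to $\mathds 1_{(0,\infty)}$: then $(A\phi_n)\le0$ by subadditivity and $(B\phi_n)(\theta,x)\le k$ on the support of $\beta$, so $\tfrac{d}{dt}\langle\mu_t,\phi_n\rangle\le k\,\overline F\,\beta(\Theta)\,\|\mu_t\|_{VT}$; integrating and letting $n\to\infty$ (monotone convergence) gives $\|\mu_t\|_{VT}\le\|\mu^{in}\|_{VT}+k\,\overline F\,\beta(\Theta)\int_0^t\|\mu_s\|_{VT}\,ds$, whence Gronwall yields $\sup_{[0,t]}\|\mu_s\|_{VT}\le e^{k\,\overline F\,\beta(\Theta)t}\|\mu^{in}\|_{VT}$, i.e.\ (\ref{mu_bound1}). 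Since the length of the existence interval depends on the initial datum only through an upper bound on its total variation, this estimate rules out finite-time blow-up of $\|\mu_t\|_{VT}$, so the local solution extends to all $t\ge0$ by the standard continuation argument, and uniqueness is global by patching the local uniqueness of the fixed point — every non-negative weak solution satisfies (\ref{mu_bound1}) and hence stays in a fixed ball — or, when in addition $\mu^{in}\in\mathcal M^+_\lambda$, directly from (\ref{unicity}) and Gronwall.

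The delicate step is the contraction estimate: one must simultaneously control the quadratic coagulation nonlinearity and the path-dependence of the loss rates inside the Duhamel exponentials, with a Lipschitz constant uniform over the ball $\{\|\mu\|_{VT}\le R\}$. What makes this work is that, $K$ being bounded, $\ell^\mu$ and its increments $\ell^\mu-\ell^\nu$ are bounded \emph{uniformly in the mass variable}, so the exponential factors depend Lipschitz-continuously on the path in supremum norm; once this is in hand, positivity (built into the Duhamel form), the total-variation bound (from $\phi_n=(nx)\wedge 1$), and globalization are routine.
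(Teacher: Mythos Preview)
Your proof is correct and follows a route that shares the same core mechanism as the paper---an integrating-factor/Duhamel representation to secure non-negativity, together with a Picard-type iteration and a Gronwall bound on the total variation---but organises the argument differently. The paper proceeds in two layers: it first freezes the coagulation partner measure $\nu$, introduces the integrating factor $\gamma_t(x)=\exp\big(\int_0^t[\int K(x,y)\nu_s(dy)-F(x)]\,ds\big)$ and solves the resulting \emph{linear} problem globally in time by an explicit Picard scheme on the transformed equation (Lemma~\ref{positivesol} and Lemma~\ref{equivsols}); it then closes the nonlinearity through an \emph{implicit} outer iteration $\mu^{n+1}=\mathcal S(\mu^n)$, using the crucial fact that the total-variation bound \eqref{mu_bound} is uniform in the frozen datum $\nu$ to get convergence directly on $[0,\infty)$. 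Your argument instead attacks the full nonlinear Duhamel map $\Gamma$ in one shot by a contraction on a short time interval, and then globalises via the a~priori bound \eqref{mu_bound1}. What you gain is a shorter, more self-contained fixed-point step; what the paper's two-level scheme buys is that no local-to-global continuation is needed, since the uniform-in-$\nu$ bound makes the outer iteration converge on every compact interval from the outset. A minor difference: the paper obtains the total-variation estimate by testing directly with $\phi\equiv 1$ (working with $L^\infty$ test functions in the bounded case, which yields the slightly sharper constant $k-1$), whereas you stay within $\mathcal H_\lambda$ via the approximation $\phi_n=(nx)\wedge 1$; both are fine.
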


\begin{remark}\label{RemarkWeakSol}
Proposition \ref{existence_fini}. deals with weak solutions to (\ref{forteq2}) with
$\mu^{in}\in\mathcal M_0^+$ and with respect to the set of test functions $\phi \in
L^{\infty}([0,\infty))$. However, when $\mu^{in}\in\mathcal M_{\lambda}^+$, we can apply equation
(\ref{weakeq2}) with $\phi(x) = x^{\lambda}\wedge A$ with $A>0$, the Gronwall Lemma and then make
tend $A$ to infinity to prove that
\begin{equation*}
\sup_{[0,T]} M_{\lambda}(\mu_t) < \infty,\,\, \forall T\geq 0.
\end{equation*}

In the same way, using this last bound together with (\ref{Hypfinite}), (\ref{mu_bound1}) and the
Lebesgue dominated convergence Theorem, we extend readily to $\phi\in \mathcal H_{\lambda}$. Hence,
whenever $\mu^{in}\in\mathcal M_{\lambda}^+$ we obtain a $(\mu^{in},K,F,\beta,\lambda)$-weak
solution $(\mu_t)_{t\geq0}$ to (\ref{weakeq2}).
\end{remark}

\noindent To prove this proposition we need to replace the operator $A$ in (\ref{weakeq2}) by an
equivalent one, this new operator will be easier to manipulate. We consider, for $\phi$ a bounded
function, the following operators
\begin{align}
(\tilde A\phi)(x,y) & =  K(x,y) \Big [\frac{1}{2} \phi(x+y)-\phi(x) \Big ],\label{operatorA2}\\
(L\phi)(x)  & =   F(x) \int_{\Theta} \Big(\sum_{i\geq 1} \phi(\theta_i x) - \phi(x) \Big)
\beta(d\theta). \label{operatorB2}
\end{align}

Thus, (\ref{weakeq2}) can be rewritten as
\begin{align}
&\frac{d}{dt} \int_0^ {\infty} \phi(x) c_t(dx) 
 =  \int_0^{\infty}\Big[\int_0^{\infty}(\tilde A \phi)(x,y) c_t(dy) + (L\phi)(x)\Big] c_t(dx).\nonumber \\
& \label{weakv2}
\end{align}

The Proposition will be proved using an implicit scheme for equation (\ref{weakv2}). First, we need
to provide a unique and non-negative solution to this scheme.

\begin{lemma}\label{positivesol}
Consider $\mu^{in} \in \mathcal M^+_0$ and let $(\nu_t)_{t\geq 0}$ be a family of measures in
$\mathcal M^+_0$ such that $\sup_{[0,t]} \| \nu_s\|_{VT} < \infty$ for all $t\geq 0$. Then, under
the assumptions (\ref{Hypfinite}), there exists a unique non-negative solution $(\mu_t)_{t\geq0}$
starting at $\mu_0 = \mu^{in}$ to
\begin{align}
&\int_0^ {\infty} \phi(x) \mu_t(dx) 
\label{possoleq} \\
& =  \int_0^{\infty}\phi(x) \mu_0(dx) +\int_0^{t} \int_0^{\infty}\Big[\int_0^{\infty}(\tilde A
\phi)(x,y) \nu_s(dy) + (L\phi)(x)\Big] \mu_s(dx)ds \nonumber
\end{align}
for all $\phi \in L^{\infty}(\mathbb R^+)$. Furthermore, the solution satisfies for all $t\geq 0$,
\begin{equation}\label{mu_bound}
\sup_{[0,t]} \| \mu_s\|_{VT} \leq C_t\,\| \mu^{in}\|_{VT},
\end{equation} 
where $C_t$ is a positive constant depending on $t$, $\overline{K}$, $\overline{F}$ and $\beta$. The constant $C_t$ does not depend on  $\sup_{[0,t]} \| \nu_s\|_{VT}$.
\end{lemma}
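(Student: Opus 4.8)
The plan is to solve the linear equation (\ref{possoleq}) by a fixed-point argument on a suitable complete space of measure-valued paths, exploiting the fact that, once $(\nu_s)$ is frozen, equation (\ref{possoleq}) is \emph{linear} in $\mu$. First I would rewrite (\ref{possoleq}) in mild form. Decompose the action $\phi\mapsto \int (\tilde A\phi)(x,y)\nu_s(dy)+(L\phi)(x)$ evaluated against $\mu_s$ as a ``gain'' part (the terms with $\phi(x+y)$ and $\phi(\theta_i x)$, which are non-negative when $\phi\geq 0$) minus a ``loss'' part of the form $a_s(x)\phi(x)$ with
\begin{equation*}
a_s(x)=\int_0^\infty K(x,y)\,\nu_s(dy)+F(x)\beta(\Theta),\qquad 0\le a_s(x)\le \overline K\,\|\nu_s\|_{VT}+\overline F\,\beta(\Theta)=:\Lambda_s .
\end{equation*}
Using the integrating factor $e^{-\int_0^t a_s(x)ds}$ one turns (\ref{possoleq}) into a fixed-point equation $\mu=\mathcal T\mu$ where $\mathcal T$ maps a path $(\mu_s)$ to the path whose action on $\phi\in L^\infty$ is the explicit (non-negative) expression obtained by Duhamel's formula: the surviving mass of $\mu^{in}$ plus the time-integral of the gain terms, each carrying a survival weight in $[0,1]$. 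Because all ingredients of the gain part are non-negative, $\mathcal T$ manifestly preserves non-negativity; and the total-variation growth is controlled by testing with $\phi\equiv 1$, which, together with $\sum_{i\ge 1}\phi(\theta_i x)\le k$ on $\Theta_k$, gives $\frac{d}{dt}\|\mu_t\|_{VT}\le \big(\overline K\|\nu_t\|_{VT}+(k+1)\overline F\beta(\Theta)\big)\|\mu_t\|_{VT}$, hence (\ref{mu_bound}) by Gronwall with $C_t=\exp\!\big(\int_0^t(\overline K\|\nu_s\|_{VT}+(k+1)\overline F\beta(\Theta))ds\big)$, a constant independent of the chosen bound on $\|\nu\|_{VT}$ only through $\|\nu\|$ itself — so one fixes $N:=\sup_{[0,T]}\|\nu_s\|_{VT}$ and works on a fixed horizon $[0,T]$.

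Next I would run the contraction. Work in the Banach space $X_T=\{(\mu_s)_{s\in[0,T]} : \sup_{[0,T]}\|\mu_s\|_{VT}<\infty\}$ with norm $\sup_{[0,T]}\|\mu_s\|_{VT}$, or rather in the closed subset of non-negative paths with $\sup\|\mu_s\|_{VT}\le C_T\|\mu^{in}\|_{VT}$, which is stable under $\mathcal T$ by the previous paragraph. For two paths $\mu,\mu'$, testing the difference $\mathcal T\mu-\mathcal T\mu'$ against an arbitrary $\phi$ with $\|\phi\|_\infty\le 1$ and bounding the kernels by $\overline K,\overline F,\beta(\Theta)$ and the number of fragments by $k$ yields
\begin{equation*}
\sup_{\|\phi\|_\infty\le1}\Big|\int\phi\,d(\mathcal T\mu_t-\mathcal T\mu'_t)\Big|\le C\!\int_0^t \sup_{\|\phi\|_\infty\le1}\Big|\int\phi\,d(\mu_s-\mu'_s)\Big|\,ds,
\end{equation*}
with $C=C(\overline K,\overline F,\beta(\Theta),k,N)$; iterating, $\mathcal T^n$ is a contraction for $n$ large (the usual $C^nT^n/n!$ bound), giving a unique fixed point in $X_T$. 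Since $T$ is arbitrary and the bound (\ref{mu_bound}) is uniform, the solutions on successive intervals paste together into a unique global non-negative solution. Finally I would check that a fixed point of $\mathcal T$ is exactly a solution of (\ref{possoleq}): this is just differentiating the Duhamel formula in $t$ (legitimate because $s\mapsto\int\phi\,d\mu_s$ is Lipschitz, the integrand being bounded on $[0,T]$), and conversely that any solution of (\ref{possoleq}) satisfies the Duhamel identity, which follows by multiplying by the integrating factor — this gives the uniqueness claimed in the Lemma, since uniqueness in $X_T$ is part of the fixed-point conclusion.

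The bookkeeping obstacle, and the step I expect to need the most care, is the handling of the loss term $a_s(x)\phi(x)$: although $a_s$ is bounded, it is $x$-dependent and depends on the frozen path $\nu$, so one must verify that the integrating-factor manipulation is valid at the level of measures (it is, because for each fixed $x$ it is a scalar ODE, and Fubini applies thanks to the uniform bound $a_s(x)\le\Lambda_s\in L^\infty(0,T)$ and $\sup\|\mu_s\|_{VT}<\infty$), and that the resulting Duhamel expression indeed defines a \emph{positive measure} — which is where the assumption $\beta(\Theta\setminus\Theta_k)=0$ is used, guaranteeing that the gain term $\sum_{i\ge1}\phi(\theta_i x)=\sum_{i=1}^k\phi(\theta_i x)$ is a finite non-negative combination of pushforwards of $\mu_s$, hence a genuine measure of finite mass. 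Everything else — positivity, the bound (\ref{mu_bound}), and the contraction — is then routine.
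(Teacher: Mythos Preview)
Your overall approach --- remove the loss terms by an integrating factor, then obtain existence, uniqueness and positivity by a fixed-point argument on the resulting equation with non-negative right-hand side --- is essentially the paper's. The paper sets $\gamma_t(x)=\exp\big[\int_0^t a_s(x)\,ds\big]$, shows that $\tilde\mu_t:=\gamma_t\mu_t$ solves an equation with only gain terms (Lemma~\ref{equivsols}), and then runs an explicit Picard scheme on that transformed equation; your Duhamel/contraction formulation is the same idea in different packaging.

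There is, however, a genuine gap in your derivation of the bound (\ref{mu_bound}). The last sentence of the lemma requires that $C_t$ be \emph{independent of} $\sup_{[0,t]}\|\nu_s\|_{VT}$; this is precisely what is used later in Proposition~\ref{existence_fini} to obtain a bound uniform in the Picard index $n$ when one takes $\nu=\mu^n$. Your estimate
\[
\frac{d}{dt}\|\mu_t\|_{VT}\le\big(\overline K\,\|\nu_t\|_{VT}+(k+1)\overline F\,\beta(\Theta)\big)\,\|\mu_t\|_{VT}
\]
keeps the $\overline K\,\|\nu_t\|_{VT}$ term and thus produces a constant that \emph{does} depend on $\nu$; the sentence you wrote after it does not resolve this. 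The missing observation is that, testing the \emph{original} equation (\ref{possoleq}) (not the Duhamel form) against $\phi\equiv 1$, one has $(\tilde A\,1)(x,y)=K(x,y)\big(\tfrac12-1\big)=-\tfrac12 K(x,y)\le 0$. Once non-negativity of $\mu_t$ is established, the coagulation contribution is therefore non-positive and can simply be dropped, leaving
\[
\frac{d}{dt}\|\mu_t\|_{VT}\le \overline F\,(k-1)\,\beta(\Theta)\,\|\mu_t\|_{VT},
\]
whence $C_t=e^{\overline F(k-1)\beta(\Theta)\,t}$, independent of $\nu$. This is exactly the paper's argument; without it the lemma as stated is not proved.
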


We will prove this lemma in two steps. First, we show that (\ref{possoleq}) is equivalent to another
equation. This new equation is constructed in such a way that the negative terms of equation
(\ref{possoleq}) are eliminated. Next, we prove existence and uniqueness for this new equation. This
solution will be proved to be non-negative and it will imply existence, uniqueness and
non-negativity of a solution to (\ref{possoleq}).

\begin{proof}
\textbf{Step 1.-} First, we give an auxiliary result which allows to differentiate equation
(\ref{possoleq2}) when the test function depends on $t$.
\begin{lemma}\label{eqderivee}
Let $(t,x) \mapsto \phi_t(x) : \mathbb R^+ \times \mathbb R^+ \rightarrow \mathbb R$ be a bounded
measurable function, having a bounded partial derivative $\partial \phi /\partial t$ and consider
$(\mu_t)_{t\geq 0}$ a weak-solution to (\ref{possoleq}). Then, for all $t\geq 0$,
\begin{align*}
&\frac{d}{dt} \int_0^ {\infty} \phi_t(x) \mu_t(dx) = \int_0^ {\infty} \frac{\partial}{\partial
t}\phi_t(x) \mu_t(dx) \\
&+ \int_0^{\infty}\int_0^{\infty}(\tilde A \phi_t)(x,y) \mu_t(dx) \nu_t(dy) +
\int_0^{\infty} (L\phi_t)(x) \mu_t(dx).
\end{align*}
\end{lemma}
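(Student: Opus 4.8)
The plan is to pass from (\ref{possoleq}), which is an identity for \emph{time-independent} test functions, to its differential form for the time-dependent family $\phi_t$ by means of a time-discretisation. The necessary preliminary is a boundedness/continuity fact. Under (\ref{Hypfinite}) the two operators in (\ref{possoleq}) are bounded on $L^\infty(\mathbb R^+)$: from (\ref{operatorA2}), $\sup|(\tilde A\phi)|\le\tfrac32\overline K\|\phi\|_\infty$, and from (\ref{operatorB2}), since $\beta$ is a finite measure carried by $\Theta_k$ (so $\beta$-a.e.\ at most $k$ of the $\theta_i$ are non-zero), $\sup|(L\phi)|\le(k+1)\overline F\beta(\Theta)\|\phi\|_\infty$. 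Feeding an arbitrary $\phi\in L^\infty(\mathbb R^+)$ into (\ref{possoleq}) and using these bounds with $\sup_{[0,T]}\|\mu_s\|_{VT}<\infty$ and $\sup_{[0,T]}\|\nu_s\|_{VT}<\infty$ (the first by (\ref{mu_bound}), the second by hypothesis) gives, for $0\le t'\le t''\le T$,
\[
\Big|\int_0^\infty\phi\,d\mu_{t''}-\int_0^\infty\phi\,d\mu_{t'}\Big|\le C_T\|\phi\|_\infty(t''-t'),
\]
so $s\mapsto\mu_s$ is Lipschitz in total variation norm on every $[0,T]$; likewise $s\mapsto\nu_s$, hence $s\mapsto\mu_s\otimes\nu_s$.

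I would then establish the integral identity
\[
\int_0^\infty\phi_t\,d\mu_t-\int_0^\infty\phi_0\,d\mu_0=\int_0^t\!\Big[\int_0^\infty\tfrac{\partial\phi_s}{\partial s}\,d\mu_s+\int_0^\infty\!\!\int_0^\infty(\tilde A\phi_s)\,d\mu_s\,d\nu_s+\int_0^\infty(L\phi_s)\,d\mu_s\Big]ds.
\]
Fix $t$ and a partition $0=t_0<\cdots<t_N=t$ of mesh $\delta$. Write $\int\phi_t\,d\mu_t-\int\phi_0\,d\mu_0$ as the telescoping sum $\sum_{k}\big(\int\phi_{t_{k+1}}\,d\mu_{t_{k+1}}-\int\phi_{t_k}\,d\mu_{t_k}\big)$ and, in each summand, insert $\pm\int\phi_{t_k}\,d\mu_{t_{k+1}}$ to split it into $\int(\phi_{t_{k+1}}-\phi_{t_k})\,d\mu_{t_{k+1}}$ plus $\int\phi_{t_k}\,d\mu_{t_{k+1}}-\int\phi_{t_k}\,d\mu_{t_k}$. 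In the first piece use $\phi_{t_{k+1}}(x)-\phi_{t_k}(x)=\int_{t_k}^{t_{k+1}}\partial_s\phi_s(x)\,ds$ and Fubini (legitimate since $|\partial_s\phi_s|\le\|\partial_t\phi\|_\infty$ and $\mu_{t_{k+1}}$ is finite) to turn it into $\int_{t_k}^{t_{k+1}}\!\int_0^\infty\partial_s\phi_s\,d\mu_{t_{k+1}}\,ds$; for the second piece, subtract (\ref{possoleq}) at times $t_{k+1}$ and $t_k$ with the fixed test function $\phi_{t_k}$ to get $\int_{t_k}^{t_{k+1}}\big[\int\!\!\int(\tilde A\phi_{t_k})\,d\mu_s\,d\nu_s+\int(L\phi_{t_k})\,d\mu_s\big]ds$. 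Summing over $k$ reproduces the right-hand side of the desired identity, except that $\phi_s$ is frozen at $\phi_{t_k}$ in the $\tilde A$, $L$ integrals and $\mu_{t_{k+1}}$ stands in for $\mu_s$ in the $\partial_s\phi$ integral; each replacement costs $O(\delta)$, using $\|(\tilde A\phi_{t_k})-(\tilde A\phi_s)\|_\infty\le\tfrac32\overline K\|\partial_t\phi\|_\infty\delta$, $\|(L\phi_{t_k})-(L\phi_s)\|_\infty\le(k+1)\overline F\beta(\Theta)\|\partial_t\phi\|_\infty\delta$ and the bounded variation of the measures for the first two, and $\big|\int\partial_s\phi_s\,d(\mu_{t_{k+1}}-\mu_s)\big|\le\|\partial_t\phi\|_\infty\|\mu_{t_{k+1}}-\mu_s\|_{VT}\le C_T\|\partial_t\phi\|_\infty\delta$ for the third. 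Letting $\delta\to0$ yields the identity.

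Finally, the integrand $s\mapsto\int\partial_s\phi_s\,d\mu_s+\int\!\!\int(\tilde A\phi_s)\,d\mu_s\,d\nu_s+\int(L\phi_s)\,d\mu_s$ is continuous on $[0,t]$: for the last two terms by the same splitting as above (the integrands vary continuously and are uniformly bounded because $s\mapsto\phi_s(x)$ is Lipschitz, and $s\mapsto\mu_s\otimes\nu_s$, $s\mapsto\mu_s$ are total-variation continuous), and for the $\partial_s\phi$ term by the regularity of $\partial_t\phi$ present in the intended application (in the general bounded case one at least gets absolute continuity of $t\mapsto\int\phi_t\,d\mu_t$ with the stated a.e.\ derivative). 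The fundamental theorem of calculus applied to the integral identity then gives the formula for $\frac{d}{dt}\int_0^\infty\phi_t\,d\mu_t$.

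The delicate point is the $\delta\to0$ limit in the $\partial_s\phi$ contribution: since $\partial_t\phi$ is assumed only bounded, one cannot pass to the limit pointwise in $s$ and must keep the integrand and move the measure, which is precisely why the total-variation (Lipschitz) continuity of $s\mapsto\mu_s$ — the only non-bookkeeping ingredient — has to be secured first. The remaining steps (telescoping, the two Fubini applications, the uniform bounds on $\tilde A$ and $L$) are routine under (\ref{Hypfinite}).
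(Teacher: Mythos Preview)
Your proposal is correct and follows essentially the same route as the paper: the same telescoping over a partition, the same splitting $\int(\phi_{t_{k+1}}-\phi_{t_k})\,d\mu_{t_{k+1}}+\big(\int\phi_{t_k}\,d\mu_{t_{k+1}}-\int\phi_{t_k}\,d\mu_{t_k}\big)$, and the same use of (\ref{possoleq}) on the second piece. The paper is terser at the limiting step (it simply asserts ``the lemma follows from letting $n\to\infty$ since $\overline s_n\to s$''), whereas you make explicit the total-variation Lipschitz continuity of $s\mapsto\mu_s$ that justifies replacing $\mu_{t_{k+1}}$ by $\mu_s$ in the $\partial_s\phi$ term.
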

\begin{proof} First, note that for $0\leq t_1 \leq t_2$ we have,
\begin{align*}
& \int_0^ {\infty} \phi_{t_2}(x) \mu_{t_2}(dx) -  \int_0^ {\infty} \phi_{t_1}(x) \mu_{t_1}(dx) \\
& =\,\, \int_0^ {\infty} \left( \phi_{t_2}(x) - \phi_{t_1}(x) \right) \mu_{t_2}(dx)
+ \int_0^ {\infty} \phi_{t_1}(x)\left(\mu_{t_2} - \mu_{t_1} \right)(dx)\\
&= \,\, \int_{t_1}^{t_2} \int_0^ {\infty} \frac{\partial}{\partial t} \phi_s(x)
\mu_{t_2}(dx)ds + \int_{t_1}^{t_2} \frac{d}{dt} \int_0^ {\infty} \phi_{t_1}(x) \mu_t(dx) dt \\
& = \,\, \int_{t_1}^{t_2} \int_0^ {\infty} \frac{\partial}{\partial t} \phi_s(x)
\mu_{t_2}(dx) ds\\
& + \int_{t_1}^{t_2} \Big[ \int_0^{\infty}\int_0^{\infty}(\tilde A \phi_{t_1})(x,y)
\mu_s(dx) \nu_s(dy) + \int_0^{\infty} (L\phi_{t_1})(x) \mu_s(dx)\Big]ds.
\end{align*}
Thus, fix $t>0$ and set for $n\in\mathbb N$, $t_k = t\dfrac{k}{n}$ with $k=0,1,\ldots,n$, we get
\begin{align*}
&\int_0^ {\infty} \phi_t(x) \mu_t(dx)  =  \int_0^{\infty} \phi_0(x)\mu_0(dx) \\
& \qquad  + \sum_{k=1}^n \Big[
\int_0^ {\infty} \phi_{t_{k}}(x) \mu_{t_{k}}(dx) - \int_0^ {\infty} \phi_{t_{k-1}}(x)
\mu_{t_{k-1}}(dx)\Big] \\
& = \int_0^{\infty} \phi_0(x)\mu_0(dx) + \sum_{k=1}^n \int_{t_{k-1}}^{t_k} \int_0^ {\infty}
\frac{\partial}{\partial t} \phi_s(x) \mu_{t_k}(dx) ds+\sum_{k=1}^n \int_{t_{k-1}}^{t_k}\\
&\qquad \Big[ \int_0^{\infty}\int_0^{\infty}(\tilde A
\phi_{t_{k-1}})(x,y) \mu_s(dx) \nu_s(dy) + \int_0^{\infty} (L\phi_{t_{k-1}})(x) \mu_s(dx)\Big]ds.
\end{align*}
Next, for $s\in[t_{k-1},t_k)$ we set $k=\left\lfloor \frac{ns}{t}\right\rfloor$ and use the
notation $\overline s_n := t_k = \frac{t}{n} \left\lfloor \frac{ns}{t}\right\rfloor$ and
$\underline s_n := t_{k-1}$. Thus, the equation above can be rewritten as
\begin{align*}
& \int_0^ {\infty} \phi_t(x) \mu_t(dx) =  \int_0^{\infty} \phi_0(x)\mu_0(dx) + \int_0^t
\int_0^{\infty} \frac{\partial}{\partial t}\phi_s(x) \mu_{\overline s_n}(dx) ds \\
& + \int_0^t\int_0^{\infty}\int_0^{\infty}(\tilde A \phi_{\underline s_n})(x,y) \mu_{s}(dx)
\nu_s(dy) ds + \int_0^t \int_0^{\infty} (L\phi_{\underline s_n})(x) \mu_s(dx) ds,
\end{align*}
and the lemma follows from letting $n \rightarrow \infty$ since $\overline s_n \rightarrow s$.
\end{proof}

Next, we introduce a new equation. We put for $t\geq0$, 
\begin{equation}\label{gammaf}
\gamma_t(x) = \exp \Big[ \int_0^t \Big( \int_0^{\infty} K(x,y) \nu_s(dy) - F(x) \Big) ds \Big],\end{equation}
and we consider the equation
\begin{align}
& \frac{d}{dt} \int_0^ {\infty} \phi(x) \tilde \mu_t(dx)  =  \int_0^{\infty} \Big[
\int_0^{\infty} \frac{1}{2} K(x,y) (\phi\gamma_t)(x+y) \nu_t(dy)\nonumber \\
& + F(x) \int_{\Theta} \sum_{i\geq 1} (\phi\gamma_t) (\theta_i x) \beta(d\theta)
\Big]\gamma_t^{-1}(x) \tilde \mu_t(dx).\label{possoleq2}
\end{align}
Now, we give a result that relates (\ref{possoleq}) to (\ref{possoleq2}).
\begin{lemma}\label{equivsols}
Consider $\mu^{in} \in \mathcal M^+_0$ and recall (\ref{gammaf}). Then, $(\mu_t)_{t\geq 0}$ with
$\mu_0 = \mu^{in}$ is a weak-solution to (\ref{possoleq}) if and only if $(\tilde \mu_t)_{t\geq 0}$
with $\tilde \mu_0 = \mu^{in}$ is a weak-solution to (\ref{possoleq2}), where $\tilde \mu_t =
\gamma_t\mu_t$ for all $t\geq 0$.
\end{lemma}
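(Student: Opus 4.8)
The plan is to exploit the integrating-factor structure already built into (\ref{gammaf}): the exponential weight $\gamma_t$ is tailored so that replacing $\mu_t$ by $\gamma_t\mu_t$ exactly annihilates the \emph{loss} (diagonal) contributions $-\phi(x)$ carried by $(\tilde A\phi)$ and $(L\phi)$, thereby converting (\ref{possoleq}) into the purely ``gain'' equation (\ref{possoleq2}). Since $\gamma_0\equiv 1$ we have $\tilde \mu_0=\gamma_0\mu_0=\mu^{in}$, so it suffices to prove that, for each fixed $\phi\in L^\infty(\mathbb R^+)$, the identity (\ref{possoleq2}) for $t\mapsto\int_0^\infty\phi\,d\tilde \mu_t$ is equivalent to the identity (\ref{possoleq}) for $t\mapsto\int_0^\infty\phi\,d\mu_t$. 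First I would record the elementary bounds used throughout: under (\ref{Hypfinite}) together with $\sup_{[0,t]}\|\nu_s\|_{VT}<\infty$, both $(s,x)\mapsto\gamma_s(x)$ and $(s,x)\mapsto\gamma_s^{-1}(x)$ are bounded on $[0,t]\times(0,\infty)$ and have a bounded partial $t$-derivative there, with $\partial_t\gamma_t(x)=\gamma_t(x)\,r_t(x)$, where $r_t(x)$ denotes the total instantaneous rate at which $\phi(x)$ appears with a minus sign on the right-hand side of (\ref{possoleq}) at mass $x$ (i.e.\ $\int_0^\infty K(x,y)\nu_t(dy)$ plus the fragmentation loss rate). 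This last identity is the defining feature of $\gamma$ and is the only place the specific form of (\ref{gammaf}) enters.

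For the direct implication, assume $(\mu_t)$ solves (\ref{possoleq}). Fixing $\phi\in L^\infty(\mathbb R^+)$, I would apply Lemma \ref{eqderivee} to the test function $\phi_t(x):=\phi(x)\gamma_t(x)$ — which is bounded and has bounded $\partial_t$ by the bounds above — to get
\begin{equation*}
\frac{d}{dt}\int_0^\infty\phi(x)\tilde \mu_t(dx)=\int_0^\infty\partial_t\phi_t(x)\,\mu_t(dx)+\int_0^\infty\!\!\int_0^\infty(\tilde A\phi_t)(x,y)\,\mu_t(dx)\,\nu_t(dy)+\int_0^\infty(L\phi_t)(x)\,\mu_t(dx).
\end{equation*}
Expanding, $(\tilde A\phi_t)(x,y)=K(x,y)\bigl[\tfrac12\phi(x+y)\gamma_t(x+y)-\phi(x)\gamma_t(x)\bigr]$ and $(L\phi_t)(x)=F(x)\int_\Theta\bigl(\sum_{i\ge1}\phi(\theta_i x)\gamma_t(\theta_i x)-\phi(x)\gamma_t(x)\bigr)\beta(d\theta)$: the terms proportional to $-\phi(x)\gamma_t(x)$, once integrated against $\nu_t(dy)$ and $\beta(d\theta)$, add up to exactly $-\phi(x)\gamma_t(x)\,r_t(x)$ and so cancel the $\partial_t\phi_t$ term. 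What is left is precisely the right-hand side of (\ref{possoleq2}) after writing $\gamma_t^{-1}(x)\tilde \mu_t(dx)=\mu_t(dx)$. Hence $(\tilde \mu_t)$ solves (\ref{possoleq2}).

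The converse is handled symmetrically. First I would establish, by exactly the telescoping/Riemann-sum argument used to prove Lemma \ref{eqderivee}, the analogous differentiation rule for solutions of (\ref{possoleq2}): if $(\tilde \mu_t)$ solves (\ref{possoleq2}) and $\psi_t$ is bounded with bounded $\partial_t\psi_t$, then $\frac{d}{dt}\int\psi_t\,d\tilde \mu_t$ equals $\int\partial_t\psi_t\,d\tilde \mu_t$ plus the right-hand side of (\ref{possoleq2}) with $\phi$ replaced by $\psi_t$. Applying this with $\psi_t(x):=\phi(x)\gamma_t^{-1}(x)$ to $\mu_t:=\gamma_t^{-1}\tilde \mu_t$, and using $\partial_t\gamma_t^{-1}(x)=-\gamma_t^{-1}(x)\,r_t(x)$, the same cancellation runs in reverse: the $\partial_t\psi_t$ term now reinstates exactly the loss terms of (\ref{possoleq}), so $(\mu_t)$ solves (\ref{possoleq}); and $\mu_0=\gamma_0^{-1}\tilde \mu_0=\mu^{in}$.

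The computation is routine once this structure is in place. The main point needing care is verifying that $\phi\gamma_t$ and $\phi\gamma_t^{-1}$ genuinely satisfy the hypotheses of the differentiation lemmas — which is exactly where (\ref{Hypfinite}) and the finiteness of $\sup_{[0,t]}\|\nu_s\|_{VT}$ are used, to bound $\gamma_t$, $\gamma_t^{-1}$ and their $t$-derivatives uniformly on $[0,t]\times(0,\infty)$ — together with the term-by-term bookkeeping showing that $\partial_t\gamma_t/\gamma_t$ coincides with the diagonal part one wishes to remove. I do not expect any serious obstacle: the weight $\gamma_t$ in (\ref{gammaf}) has been chosen precisely so that this matching is automatic.
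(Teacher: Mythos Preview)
Your proposal is correct and follows essentially the same route as the paper: apply Lemma~\ref{eqderivee} with the time-dependent test function $\phi_t=\phi\gamma_t$, use the boundedness of $\gamma_t$, $\gamma_t^{-1}$ and $\partial_t\gamma_t$ (from (\ref{Hypfinite}) and $\sup_{[0,t]}\|\nu_s\|_{VT}<\infty$) to justify this, and observe that the $\partial_t\gamma_t$ contribution cancels the diagonal loss terms; then reverse the roles with $\psi_t=\phi\gamma_t^{-1}$ for the converse. You are in fact a bit more careful than the paper in flagging that the converse needs an analogue of Lemma~\ref{eqderivee} for solutions of (\ref{possoleq2}), which the paper subsumes under ``in the same way''.
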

\begin{proof}
First, assume that  $(\mu_t)_{t\geq 0}$ is a weak-solution to (\ref{possoleq}). 

We have $\dfrac{\partial}{\partial t}\gamma_t(x) = \gamma_t(x)\Big[ \displaystyle \int_0^{\infty}
K(x,y) \nu_t(dy) - F(x) \Big]$. Note that $\gamma_t$, $\gamma_t^{-1}$ and
$\frac{\partial}{\partial t}\gamma_t$ are bounded on $[0,t]$ for all $t \geq 0$, by (\ref{Hypfinite})
and since $\underset{[0,t]}\sup \| \nu_s\|_{VT} < \infty$.

Set $\tilde \mu_t = \gamma_t \mu_t$, recall (\ref{operatorA2}) and (\ref{operatorB2}), by Lemma
\ref{eqderivee}., for all bounded measurable functions $\phi$, we have
\begin{align*}
&\frac{d}{dt} \int_0^ {\infty} \phi(x) \tilde \mu_t(dx) = \int_0^ {\infty} \phi(x) \gamma_t(x)
\Big[\int_0^{\infty} K(x,y) \nu_t(dy) - F(x) \Big] \mu_t(dx)\nonumber \\
& \qquad +\int_0^{\infty} \int_0^{\infty}
\Big[\frac{1}{2}(\phi\gamma_t)(x+y)-(\phi\gamma_t)(x)\Big]K(x,y) \nu_t(dy)\mu_t(dx)\nonumber\\
&\qquad + \int_0^{\infty} F(x) \int_{\Theta} \Big(\sum_{i\geq 1} (\phi\gamma_t)(\theta_i
x) - (\phi\gamma_t)(x) \Big) \beta(d\theta)\mu_t(dx)\nonumber\\
&= \int_0^{\infty} \int_0^{\infty} \frac{1}{2} K(x,y) (\phi\gamma_t)(x+y)
\nu_t(dy) \mu_t(dx)\nonumber \\
&\qquad + \int_0^{\infty} F(x) \int_{\Theta} \sum_{i\geq 1} (\phi\gamma_t) (\theta_i x)
\beta(d\theta) \mu_t(dx)\nonumber\\
&= \int_0^{\infty} \Big[ \int_0^{\infty} \frac{1}{2} K(x,y) (\phi\gamma_t)(x+y)
\nu_t(dy) + F(x) \int_{\Theta} \sum_{i\geq 1} (\phi\gamma_t) (\theta_i x) \beta(d\theta)
\Big]\nonumber \\
& \hspace{9cm}\times \gamma_t^{-1}(x) \tilde \mu_t(dx),
\end{align*}
and the result follows.

For the reciprocal assertion, we assume that $(\tilde \mu_t)_{t\geq 0}$ is a weak-solution to
(\ref{possoleq2}), set $\mu_t = \gamma_t^{-1} \tilde \mu_t$ and we show in the same way that
$(\mu_t)_{t\geq 0}$ is a weak-solution to (\ref{possoleq}).
\end{proof}

We note that, since all the terms between the brackets are non-negative, the right-hand side of
equation (\ref{possoleq2}) is non-negative whenever $\tilde \mu_t \geq 0$. Thus, $\gamma_t$ is an
integrating factor that removes the negative terms of equation (\ref{possoleq}).

\noindent \textbf{Step 2.-} We define the following explicit scheme for (\ref{possoleq2}): we set
$\tilde \mu^0_t = \mu^{in}$ for all $t\geq 0$ and for $n \geq 0$
\begin{equation}\label{scheme2}\left\lbrace
  \begin{array}{lcl}
\dfrac{d}{dt}\displaystyle\int_0^ {\infty} \phi(x)\tilde \mu^{n+1}_t(dx) & = & \displaystyle
\int_0^{\infty} \Big[ \int_0^{\infty} \frac{1}{2} K(x,y) (\phi\gamma_t)(x+y) \nu_t(dy) \\[4mm]
&& + F(x) \displaystyle \int_{\Theta} \sum_{i\geq 1} (\phi\gamma_t) (\theta_i x)
\beta(d\theta) \Big]\gamma_t^{-1}(x) \tilde \mu_t^n(dx)\\
\tilde \mu^{n+1}_0 & = & \mu^{in}.
\end{array} \right.
\end{equation}
Recall (\ref{Hypfinite}), note that the following operators are bounded:
\begin{eqnarray}
\Big\|\gamma_t^{-1}(\cdot)\int_0^{\infty} \frac{1}{2} K(\,\cdot\,,y) (\phi\gamma_t)(\,\cdot+y)
\nu_t(dy)\Big\|_{\infty} & \leq & C_t \|\phi\|_{\infty},\label{Aux1bounded}\\
\Big\|\gamma_t^{-1}(\cdot)F(\cdot) \displaystyle \int_{\Theta} \sum_{i\geq 1} (\phi\gamma_t)
(\theta_i\, \cdot\,) \beta(d\theta) \Big\|_{\infty} & \leq & C_t
\|\phi\|_{\infty},\label{Aux2bounded}
\end{eqnarray}
where $C_t$ is a positive constant depending on $\overline K$, $\overline F$, $\beta$ and
$\sup_{[0,t]} \| \nu_s\|_{VT}$.

Thus, we consider $\phi$ bounded, integrate in time (\ref{scheme2}), use (\ref{Aux1bounded}) and
(\ref{Aux2bounded}) to obtain
\begin{eqnarray*}
\int_0^{\infty} \phi(x) \left(\tilde \mu^{n+1}_t(dx)-\tilde\mu^{n}_t(dx)\right) & \leq
&C_{1,t}\|\phi\|_{\infty} \int_0^t \left\| \tilde\mu^{n}_s-\tilde\mu^{n-1}_s\right\|_{VT} ds \\
& & + C_{2,t}\|\phi\|_{\infty} \int_0^t \left\| \tilde\mu^{n}_s-\tilde \mu^{n-1}_s\right\|_{VT} ds,
\end{eqnarray*}
note that the the difference of the initial conditions vanishes since they are the same. We take the
$\sup$ over $ \|\phi\|_{\infty} \leq 1$ and use $\sup_{[0,t]} \| \nu_s\|_{VT} < \infty$ to deduce
\begin{equation*}
\left\| \tilde \mu^{n+1}_t-\tilde\mu^{n}_t\right\|_{VT} \leq C_{t} \int_0^{t} \left\|\tilde
\mu^{n}_s-\tilde\mu^{n-1}_s\right\|_{VT}\,ds,
\end{equation*}
where $C_t$ is a positive constant depending on $\overline K$, $\overline F$, $\beta$, $\sup_{[0,t]}
\| \nu_s\|_{VT}$ and $\|\phi\|_{\infty} $.
Hence, by classical arguments, $(\tilde \mu_t^n)_{t\geq0}$ converges in $\mathcal M_0^+$ uniformly
in time to $(\tilde \mu_t)_{t\geq0}$ solution to (\ref{possoleq2}), and since $\tilde \mu^ n_t \geq
0$ for all $n$, we deduce $\tilde \mu_t \geq 0$ for all $t\geq 0$. The uniqueness for
(\ref{possoleq2}) follows from similar computations.

Thus, by Lemma \ref{equivsols}. we deduce existence and uniqueness of $(\mu_t)_{t\geq0}$ solution to
(\ref{possoleq}), and since $\tilde \mu_t \geq 0$ we have $\mu_t \geq 0$ for all $t\geq0$.

Finally, it remains to prove (\ref{mu_bound}). For this, we apply (\ref{possoleq}) with $\phi(x)
\equiv 1$, remark that $(\tilde A 1)(x,y) \leq 0$ and that $(L1)(x)\leq \overline F (k-1)
\beta(\Theta)$. Since $\mu_t\geq 0$ for all $t\geq 0$, this implies
\begin{equation*}
\left\| \mu_t\right\|_{VT} = \int_0^{\infty} \mu_t(dx) \leq \left\| \mu_0\right\|_{VT} + \overline F
(k-1) \beta(d\Theta)\int_0^t\left\| \mu_s\right\|_{VT} ds.
\end{equation*}
Using the Gronwall Lemma, we conclude
\begin{equation*}
\sup_{[0,t]}\left\| \mu_s\right\|_{VT} \leq \left\| \mu^{in}\right\|_{VT} e^{C t}\,\,\,\, \textrm{
for all } \,\, t\geq 0,
\end{equation*}
where $C$ is a positive constant depending only on $\overline{K}$, $\overline{F}$ and $\beta$. We
point out that the term $\sup_{[0,t]}\|\nu_s\|_{VT}$ is not involved since it is relied to the
coagulation part of the equation, which is negative and bounded by $0$. This ends the proof of Lemma
\ref{positivesol}.
\end{proof}
 
\begin{proof}[Proof of Proposition \ref{existence_fini}] We define the following implicit scheme for
(\ref{weakv2}): $\mu^0_t = \mu^{in}$ for all $t\geq 0$ and for $n\geq 0$,
\begin{equation}\label{scheme1}\left\lbrace
  \begin{array}{lcl}
\dfrac{d}{dt}\displaystyle\int_0^ {\infty} \phi(x) \mu^{n+1}_t(dx) & = &
\displaystyle\int_0^{\infty}\displaystyle\int_0^{\infty}(\tilde A \phi)(x,y) \mu^{n+1}_t(dx)
\mu^{n}_t(dy) \\[4mm]
&&+ \displaystyle\int_0^{\infty} (L\phi)(x) \, \mu^{n+1}_t(dx)\\
\mu^{n+1}_0 & = & \mu^{in}.
\end{array} \right.
\end{equation}
First, from Lemma \ref{positivesol}. for $n\geq0$ we have existence of $(\mu^{n+1}_t)_{t\geq 0}$
unique and non-negative solution to (\ref{scheme1}) whenever $(\mu^{n}_t)_{t\geq 0}$ is non-negative
and $\sup_{[0,t]} \| \mu^{n}_s\|_{VT} <\infty $ for all $t\geq 0$. Hence, since $\mu^{in} \in
\mathcal M_0^+$, by recurrence we deduce existence, uniqueness and non-negativity of
$(\mu^{n+1}_t)_{t\geq 0}$ for all $n\geq0$ solution to (\ref{scheme1}).

Moreover, from (\ref{mu_bound}), this solution is bounded uniformly in $n$ on $[0,t]$ for all $t\geq
0$ since this bound does not depend on $\mu^n_t$, i.e.,
\begin{equation}\label{solBunif}
\sup_{n\geq 1}\sup_{[0,t]} \| \mu^{n+1}_s\|_{VT} \leq C_t\, \| \mu^{in}\|_{VT}. 
\end{equation}

Next, note that the operators $\tilde A$ and $L$ are bounded:
\begin{eqnarray}
\|L\phi\|_{\infty} & \leq & \overline F (k+1) \beta(\Theta) \|\phi\|_{\infty},\label{Lbounded} \\
\Big\|\int_0^{\infty}(\tilde A\phi)(\,\cdot\,,y) \mu(dy)\Big\|_{\infty} & \leq &
\frac{3}{2}\overline K \|\phi\|_{\infty} \,\|\mu\|_{VT}.\label{Abounded}
\end{eqnarray}
\noindent From (\ref{Abounded}) and (\ref{Lbounded}),
\begin{align*}
& \frac{d}{dt} \int_0^{\infty} \phi(x) \left(\mu^{n+1}_t(dx)-\mu^{n}_t(dx)\right) \\
&  = \int_0^{\infty} \int_0^{\infty}(\tilde A \phi)(x,y) \left(
\mu^{n+1}_t(dx)\mu^{n}_t(dy)- \mu^{n}_t(dx)\mu^{n-1}_t(dy)\right)\\
& \qquad +\int_0^{\infty} (L\phi)(x) \, \left(\mu^{n+1}_t-\mu^{n}_t\right)(dx)\\
&  =\int_0^{\infty} \int_0^{\infty}(\tilde A \phi)(x,y) \left[ \left(
\mu^{n+1}_t-\mu^{n}_t\right)(dx) \mu_t^n(dy) + \mu_t^n(dx)\left(
\mu^{n}_t-\mu^{n-1}_t\right)(dy)\right]\\
&\qquad+\int_0^{\infty} (L\phi)(x) \, \left(\mu^{n+1}_t-\mu^{n}_t\right)(dx)\\
& \leq\frac{3}{2} \overline K \|\phi\|_{\infty} \left\| \mu^{n}_t\right\|_{VT}
\Big[ \int_0^{\infty} \left| \mu^{n+1}_t-\mu^{n}_t\right|(dx) + \int_0^{\infty} \left|
\mu^{n}_t-\mu^{n-1}_t\right|(dy)\Big]\\
&\qquad + \overline F (k+1) \beta(\Theta) \|\phi\|_{\infty} \left\|
\mu^{n+1}_t-\mu^{n}_t\right\|_{VT},
\end{align*}
implying,
\begin{align*}
&\frac{d}{dt} \int_0^ {\infty} \phi(x) \left(\mu^{n+1}_t(dx)-\mu^{n}_t(dx)\right)  \leq 
\|\phi\|_{\infty} \big( \frac{3}{2} \overline K \left\| \mu^{n}_t\right\|_{VT} + \overline F (k+1)
\beta(\Theta) \big)\\
&\qquad \times\left\| \mu^{n+1}_t-\mu^{n}_t\right\|_{VT}+ \frac{3}{2} \overline K \, \|\phi\|_{\infty} \, \left\|\mu^{n}_t\right\|_{VT}\,\left\|
\mu^{n}_t-\mu^{n-1}_t\right\|_{VT}.
\end{align*}
We integrate on $t$, take the $\sup$ over $ \|\phi\|_{\infty} \leq 1$, and use (\ref{solBunif}), to
deduce that there exist two constants $C_{1,t}$ and $C_{2,t}$ depending on $t$ but not on $n$ such
that
\begin{align*}
 \left\| \mu^{n+1}_t-\mu^{n}_t\right\|_{VT} 
 \leq  C_{1,t} \int_0^{t} \left\| \mu^{n+1}_s-\mu^{n}_s\right\|_{VT}\,ds + C_{2,t} \int_0^{t}
\left\| \mu^{n}_s-\mu^{n-1}_s\right\|_{VT}\,ds.
\end{align*}
Note that the difference of initial conditions vanishes since they are the same. We obtain using the
Gronwall Lemma.
\begin{equation*}
\left\| \mu^{n+1}_t-\mu^{n}_t\right\|_{VT} \leq C_{2,t}\, e^{t\, C_{1,t}} \int_0^{t} \left\|
\mu^{n}_s-\mu^{n-1}_s\right\|_{VT}\,ds.
\end{equation*}
Hence, by usual arguments, $(\mu_t^n)_{t\geq0}$ converges in $\mathcal M_0^+$ uniformly in time to
the desired solution, which is also unique. Moreover, for some finite constant $C$ depending on $t$,
$\overline{K}$, $\overline{F}$ and $\beta$, this solution satisfies (\ref{mu_bound1}) by
(\ref{solBunif}).

  This concludes the proof of Proposition \ref{existence_fini}.
\end{proof}

\noindent \textbf{Existence and uniqueness for $c^{in} \in \mathcal M_{\lambda}^+ \cap \mathcal
M_{2}^+$.-}

\noindent We are no longer under (\ref{Hypfinite}), more generally we assume Hypotheses
\ref{Def_Kernels}. and \ref{Hyp_beta}. This paragraph is devoted to show existence in the case where
the initial condition satisfies:
\begin{equation*}
c^{in} \in \mathcal M_{\lambda}^+ \cap \mathcal M_{2}^+.
\end{equation*}

\begin{proof} 
First, for $n\geq 1$, we consider $c^{in,n}(dx) = \mathds 1_{[1/n,n]} c^{in}(dx)$, this
measure belongs to $\mathcal M^+_0$ and satisfies
\begin{equation}\label{cinbounded}
\sup_{n\geq 1}M_{\lambda}(c^{in,n}) \leq M_{\lambda}(c^{in}).
\end{equation}  

We also note that $\big(F^{c^{in,n}}\big)$ converges towards $F^{c^{in}}$ in
$L^{1}(0,\infty;x^{\lambda-1}\,dx)$ as $n\rightarrow \infty$. Define $K_n$ by $K_n(x,y) = K(x,y)
\wedge n$ for $(x,y) \in (0,\infty)^2$. Notice that (\ref{Hyp_K1}) and (\ref{Hyp_K2}) warrant that
\begin{equation}\label{Hyp_Kn}
\begin{array}{rcl}
K_n(x,y) & \leq & \kappa_0(x+y)^{\lambda},\\
(x^{\lambda}\wedge y^{\lambda}) |\partial_x K_n(x,y)| & \leq & \kappa_1 x^{\lambda-1} y^{\lambda}.
\end{array}
\end{equation}
Furthermore, we consider the set $\Theta(n)$ defined by $$\Theta(n) = \Big\lbrace \theta \in \Theta
: \theta_1 \leq 1-\frac{1}{n} \Big\rbrace,$$ we consider also the projector
\begin{equation}\label{projector}
\begin{array}{llcl}
\psi_n: & \Theta & \rightarrow & \Theta_n \\
		& \theta & \mapsto & \psi_n(\theta) = (\theta_1,\ldots,\theta_n,0,\ldots),
\end{array}
\end{equation}
and we put
\begin{equation} \label{beta_n}
\beta_n  = \mathds 1_{\theta\in\Theta(n)}\beta \circ \psi_n^{-1}.
\end{equation}
The measure $\beta_n$ can be seen as the restriction of $\beta$ to the projection of $\Theta(n)$
onto $\Theta_n$. Note that $\Theta(n) \subset \Theta(n+1)$ and that since we have excluded the
degenerated cases $\theta_1=1$ we have $\bigcup_n \Theta(n) = \Theta$.

Then, $K_n$, $F$ and $\beta_n$ satisfy (\ref{Hypfinite}) (use (\ref{Thetan_finite1})) and since
$c^{in,n} \in \mathcal M_{0}^+$, we have from Proposition \ref{existence_fini}. (recall Remark
\ref{RemarkWeakSol}.) that for each $n\geq 1$, there exists a
$(c^{in,n},K_n,F,\beta_n,\lambda)$-weak solution $(c^n_t)_{t\geq0}$ to (\ref{weakeq2}). 

Note that since we have fragmentation it is not evident that $M_{\lambda}(c_t)$ remains finite in
time. We need to control $M_{\lambda}(c_t)$ to verify (\ref{Def_finitemoment}). For this, we set
$\phi(x) = x^{\lambda}$, from (\ref{weakeq2}) and since $(A\phi)(x,y) \leq 0$ we have
\begin{eqnarray*}
\frac{d}{dt}\int_0^{\infty} x^{\lambda} c^n_t(dx) & = & \frac{1}{2} \int_0^{\infty}
\int_0^{\infty}K_n(x,y) (A\phi)(x,y) c^n_t(dx)\,c^n_t(dy)\nonumber\\
& & + \int_{\Theta} \int_0^{\infty} F(x) \Big(\sum_{i\geq 1} \theta_i^{\lambda} - 1\Big)
x^{\lambda} c^n_t(dx)\beta_n(d\theta)\nonumber\\
&\leq & \kappa_2 \, C_{\beta}^{\lambda} M_{\lambda}(c^n_t),
\end{eqnarray*}
where we used that clearly $C^{\lambda}_{\beta_n} \leq C^{\lambda}_{\beta}$ for all $n\geq 1$
(recall (\ref{Hyp2_Beta})). Note also that if $\sum_{i\geq 1} \theta_i^{\lambda} - 1<0$ then
$M_{\lambda}(c^n_t) < M_{\lambda}(c_0)$.

Using the Gronwall Lemma and (\ref{cinbounded}) we deduce, for all $t\geq 0$
\begin{equation}\label{momentcnbounded}
\sup_{n\geq 1} \sup_{[0,t]}M_{\lambda}(c^n_s) \leq C_t,
\end{equation}
where $C_t$ is a positive constant. Next, apply (\ref{weakeq2}) with $\phi(x) = x^{2}$ and since
$\sum_{i\geq 1}\theta_i^2 - 1\leq 0$ the fragmentation part is negative. In \cite[Lemma
A.3.\,\textit{(ii)}]{Cepeda_Fournier} was shown that there exists a constant $C$ depending only on
$\lambda$ and $\kappa_0$ such that $K_n(x,y)|(A\phi)(x,y)|\leq K(x,y)|(A\phi)(x,y)|\leq C
(x^2y^{\lambda} + x^{\lambda}y^2)$. Thus,
\begin{eqnarray*}
\frac{d}{dt}\int_0^{\infty} x^{2} c^n_t(dx) & \leq & \frac{C}{2} \int_0^{\infty} \int_0^{\infty}
(x^2y^{\lambda} + x^{\lambda}y^2) \, c^n_t(dx)\,c^n_t(dy)\nonumber\\
&= & C M_{\lambda}(c^n_t)M_{2}(c^n_t).
\end{eqnarray*}
Using the Gronwall Lemma, we obtain
\begin{equation*}
M_{2}(c^n_t) \leq M_{2}(c^{in})\, e^{C \int_0^t M_{\lambda}(c^{n}_s)ds},
\end{equation*}
for $t\geq 0$ and for each $n\geq1$. We point out that $x^2 \notin \mathcal H_{\lambda}$, but we can
proceed as in Remark \ref{RemarkWeakSol}, considering $\phi(x) = x^2 \wedge A$ with $A>0$ and making
$A$ tend to infinity.

  Hence, using (\ref{momentcnbounded}) we get
\begin{equation}\label{moment2bounded}
\sup_{n\geq 1} \sup_{[0,t]}M_{2}(c^n_s) \leq C_t,
\end{equation}
where $C_t$ is a positive constant. 
\begin{sloppypar}
We set $E_n(t,x) = F^{c^{n+1}_t}(x) - F^{c^{n}_t}(x)$ and define $R_n(t,x)=\int_0^x z^{\lambda-1}
sign(E_n(t,x)) dz$.  Recall (\ref{weakD_F}) and (\ref{DifferentialE}),
\end{sloppypar}
\begin{align*}
&\dfrac{d}{dt}\int_0^{\infty} x^{\lambda-1} |E_n(t,x)| dx   \\
&= \frac{1}{2}\int_0^{\infty} \partial_x R_n(t,z) \int_0^z \int_0^z \mathds
1_{[z,\infty)}(x+y) K_{n+1}(x,y)\\
& \hspace{5cm} \times  (c^{n+1}_t(dy)\,c^{n+1}_t(dx)-c^n_t(dy)\,c^n_t(dx))dz \\
&-\frac{1}{2}\int_0^{\infty} \partial_x R_n(t,z)\int_z^{\infty} \int_z^{\infty}
K_{n+1}(x,y)\\
& \hspace{5cm} \times (c^{n+1}_t(dy)\,c^{n+1}_t(dx)-c^n_t(dy)\,c^n_t(dx)) dz\\
&+\int_0^{\infty} \partial_x R_n(t,z)\int_{\Theta}\sum_{i\geq 1}
\int_{z/\theta_i}^{\infty} F(x) (c^{n+1}_t-c^n_t)(dx)\beta_{n+1}(d\theta) dz \\
&- \int_0^{\infty} \partial_x R_n(t,z) \int_{\Theta}\int_{z}^{\infty} F(x)
(c^{n+1}_t-c^n_t)(dx)\beta_{n+1}(d\theta)\,dz\\
&+ \frac{1}{2}\int_0^{\infty} \partial_x R_n(t,z) \int_0^z \int_0^z \mathds
1_{[z,\infty)}(x+y) \left( K_{n+1}(x,y)-K_{n}(x,y)\right) c^n_t(dy) \\
&\hspace{9.5cm} \times c^n_t(dx) dz \\
&-\frac{1}{2}\int_0^{\infty} \partial_x R_n(t,z)\int_z^{\infty} \int_z^{\infty} \left(
K_{n+1}(x,y)-K_{n}(x,y)\right) c^n_t(dy)\,c^n_t(dx)\,dz \\
&+\int_0^{\infty} \partial_x R_n(t,z)\int_{\Theta}\sum_{i\geq 1}
\int_{z/\theta_i}^{\infty} F(x)\,c^n_t(dx)(\beta_{n+1}-\beta_{n})(d\theta) dz \\
&- \int_0^{\infty} \partial_x R_n(t,z) \int_{\Theta}\int_{z}^{\infty} F(x)
\,c^n_t(dx)(\beta_{n+1}-\beta_{n})(d\theta) dz.
\end{align*}
Thus, after some computations, we obtain 
\begin{equation}\label{DifferentialEn}
\dfrac{d}{dt}\int_0^{\infty} x^{\lambda-1} |E_n(t,x)| dx = I_1^n(t,x) + I_2^n(t,x) + I_3^n(t,x)
+I_4^n(t,x),
\end{equation}
where $I_1^n(t,x)$ and $I_2^n(t,x)$ are respectively the equivalent terms to the coagulation and
fragmentation parts in (\ref{EqIcIf}) and
\begin{align*}
&I_3^n(t,x) =  \frac{1}{2} \int_0^{\infty} \int_0^{\infty} \left( K_{n+1}(x,y)-K_{n}(x,y)\right)
(AR_n(t))(x,y)c^n_t(dy)\,c^n_t(dx) \\
&I_4^n(t,x) = \int_0^{\infty} F(x) \int_{\Theta} (BR_n(t))(\theta,x)
(\beta_{n+1}-\beta_{n})(d\theta) c^n_t(dx),
\end{align*}
which are the terms resulting of the approximation. 

Exactly as in (\ref{unicity}), since the bounds in (\ref{Hyp_Kn}) do not depend on $n$ and that
$\beta_n$ satisfies (\ref{Hyp2_Beta}) uniformly in $n$, we get
\begin{align}\label{Limit2}
& I_1^n(t,x) + I_2^n(t,x) \leq C_1 M_{\lambda}(c^n_t+c^{n+1}_t) \int_0^{\infty}
x^{\lambda-1}|E_n(t,x)|\,dx \\
& \qquad \qquad\qquad\qquad+ C_2 \int_0^{\infty} x^{\lambda-1}|E_n(t,x)|\,dx. \nonumber
\end{align}

Next, since
\begin{align*} 
& K_{n+1}(x,y)-K_{n}(x,y) = \mathds 1_{\{K(x,y)>n+1 \}} + (K(x,y)-n)\mathds 1_{\{n < K(x,y) \leq n+1
\}} \\
& \hspace{3.7cm} \leq \mathds 1_{\{K(x,y)>n\}} \leq \frac{K(x,y)^2}{n^2}
\end{align*}
and using (\ref{R_inequality}), we have
\begin{align}\label{Limit3}
& |I_3^n(t,x)| = \frac{1}{2}\Big|\int_0^{\infty} \int_0^{\infty} \left(
K_{n+1}(x,y)-K_{n}(x,y)\right)(AR_n(t))(x,y) \\
& \hspace{8cm} \times c^n_t(dy)\,c^n_t(dx) \Big|\nonumber\\
&\qquad\leq \frac{1}{2} \int_0^{\infty} \int_0^{\infty} \frac{K(x,y)^2}{n^2} \left| (AR_n(t))(x,y)\right|
c^n_t(dy)\,c^n_t(dx) \nonumber \\
&\qquad \leq  \frac{2^{2\lambda+1}\kappa_0^2}{2\lambda n^2}\int_0^{\infty} \int_0^{\infty} (x \vee
y)^{2\lambda} (x\wedge y)^{\lambda}c^n_t(dy)\,c^n_t(dx) \nonumber\\
&\qquad\leq \frac{C}{n^2} M_{2\lambda}(c^n_t) M_{\lambda}(c^n_t) \,\,  \leq \,\, \frac{1}{n^2}\,C_t,\nonumber
\end{align}
we have used $M_{2\lambda}(c_t) \leq M_{\lambda}(c_t) + M_{2}(c_t)$ together with
(\ref{momentcnbounded}) and (\ref{moment2bounded}).

Finally, since $$\int_{\Theta} (BR_n(t))(\theta,x) \beta_{n}(d\theta) = \int_{\Theta}
(BR_n(t))(\psi_n(\theta),x) \mathds 1_{\{\theta \in\Theta(n)\}} \beta(d\theta),$$ we have
\begin{align}\label{Limit4}
 &\left|I_4^n(t,x) \right|  \\
& = \Big|\int_0^{\infty} F(x) \int_{\Theta} \big\lbrace
\left[(BR_n(t))(\psi_{n+1}(\theta),x) - (BR_n(t))(\psi_n(\theta),x) \right]  \nonumber \\
& \qquad \times \mathds 1_{\Theta(n)\cap\Theta(n+1) } +\, (BR_n(t))(\psi_{n+1}(\theta),x) \mathds 1_{\Theta(n+1)\setminus \Theta(n)} \big\rbrace\beta(d\theta) c^n_t(dx)\Big| \nonumber \\
&\leq \int_0^{\infty} F(x) \int_{\Theta} \left|R_n(t,\theta_{n+1} x)\right|
\mathds 1_{\Theta(n+1)\cap \Theta(n)} \beta(d\theta) c^n_t(dx) \nonumber \\
& \qquad+ \int_0^{\infty} F(x) \int_{\Theta} \Big| \sum_{i=1}^{n+1}R_n(t,\theta_i x)
-R_n(t,x)\Big| \mathds 1_{\Theta(n+1)\setminus \Theta(n)} \beta(d\theta) c^n_t(dx)\nonumber \\
& \leq C \int_0^\infty x^{\lambda} c^n_t(dx) \int_{\Theta}
\theta_{n+1}^{\lambda}\mathds 1_{\{\Theta(n+1) \cap \Theta(n)\}}\beta(d\theta)\nonumber\\
& \qquad+C \int_0^\infty x^{\lambda} c^n_t(dx) \int_{\Theta} \Big[ \sum_{i\geq 2}
\theta_i^{\lambda} + (1-\theta_1)^{\lambda}\Big]\mathds 1_{\{\Theta(n+1) \setminus
\Theta(n)\}}\beta(d\theta)\nonumber\\
& \leq C_t \int_{\Theta} \theta_{n+1}^{\lambda} \beta(d\theta) + C_t
\int_{\Theta} \Big[ \sum_{i\geq 2} \theta_i^{\lambda} + (1-\theta_1)^{\lambda}\Big] \mathds
1_{\{\Theta(n+1) \setminus \Theta(n)\}}\beta(d\theta), \nonumber
\end{align}
we used (\ref{momentcnbounded}). Gathering (\ref{Limit2}), (\ref{Limit3}) and (\ref{Limit4}) in
(\ref{DifferentialEn}) and noting $C(\theta) := \sum_{i\geq 2} \theta_i^{\lambda} +
(1-\theta_1)^{\lambda}$, we obtain
\begin{align*}
& \dfrac{d}{dt}\int_0^{\infty} x^{\lambda-1} |E_n(t,x)| dx \leq C_t M_{\lambda}(c^{in})
\int_0^{\infty} x^{\lambda-1}|E_n(t,x)|\,dx + \frac{1}{n^2}\,C_t\\
& \qquad \qquad \qquad+ C_t \int_{\Theta} \theta_{n+1}^{\lambda} \beta(d\theta) + C_t \int_{\Theta} C(\theta) \mathds
1_{\{\Theta(n+1) \setminus \Theta(n)\}}\beta(d\theta).
\end{align*}
Thus, by the Gronwall Lemma we obtain
\begin{align*}
&\int_0^{\infty} x^{\lambda-1} \big|F^{c^{n+1}_t}(x)-F^{c^{n}_t}(x) \big| dx  \leq 
C_t\Big[\int_0^{\infty} x^{\lambda-1} \big|F^{c^{in,n+1}}(x)-F^{c^{in,n}}(x)\big|dx \\
& \qquad\qquad\qquad \qquad + \frac{1}{n^2}+ \int_{\Theta} \theta_{n+1}^{\lambda} \beta(d\theta) + \int_{\Theta} C(\theta) \mathds 1_{\{\Theta(n+1) \setminus \Theta(n)\}}\beta(d\theta) \Big],
\end{align*} 
for $t\geq0$ and $n\geq1$ and where $C_t$ is a positive constant depending on $\lambda$, $\kappa_0$,
$\kappa_1$, $\kappa_2$, $\kappa_3$, $C^{\lambda}_{\beta}$, $t$ and $c^{in}$. Recalling that
\begin{equation*}
t\mapsto F^{c^{n}_t} \,\,\textrm{belongs to}\,\, \mathcal
C\big([0,\infty);L^1(0,\infty;x^{\lambda-1}dx)\big),
\end{equation*}
for each $n\geq 1$ by Lemma \ref{lemma32}. and Lemma \ref{lemma35}, and since the last three terms
in the right-hand side of the inequality above are the terms of convergent series, we conclude that
$\left(t\mapsto F^{c^{n}_t} \right)_{n\geq 1}$ is a Cauchy sequence in $\mathcal
C\left([0,\infty);L^1(0,\infty;x^{\lambda-1}dx)\right)$ and there is
\begin{equation*}
f \in \mathcal C\big([0,\infty);L^1(0,\infty;x^{\lambda-1}dx)\big)
\end{equation*}
such that
\begin{equation}\label{consq1}
\lim_{n\rightarrow \infty} \sup_{s\in[0,t]} \int_0^{\infty} x^{\lambda-1}
\big|F^{c^{n+1}_s}(x)-f(s,x)\big| dx = 0 \,\,\,\, \textrm{ for each }\,\, t\in[0,\infty).
\end{equation}
As a first consequence of (\ref{consq1}), we obtain that $x\mapsto f(t,x)$ is a non-deacreasing and
non-negative function for each $t\in [0,\infty)$. Furthermore,
\begin{equation}\label{consq2}
\lim_{\varepsilon \rightarrow 0} \sup_{s\in[0,t]} \Big[ \int_0^{\varepsilon} x^{\lambda-1} f(s,x)
dx + \int_{1/\varepsilon}^{\infty}x^{\lambda-1} f(s,x) dx \Big] = 0
\end{equation}
for each $t\in(0,\infty)$ since $f \in \mathcal
C\left([0,\infty);L^1(0,\infty;x^{\lambda-1}dx)\right)$.

We will show that this convergence implies tightness of $(c^ n_t)_{n\geq 1}$ in $\mathcal
M_{\lambda}^+$, uniformly with respect to $s\in[0,t]$. We consider $\varepsilon\in(0,1/4)$, and
since $x\mapsto F^{c^n_s}(x)$ is non-decreasing and $\lambda \in (0,1]$, it follows from Lemma
\ref{lemma32}.:
\begin{equation*}
\int_0^{\varepsilon} x^{\lambda} c^n_t(dx) + \int_{1/\varepsilon}^{\infty}x^{\lambda} c^n_t(dx) \leq
\int_0^{\varepsilon} x^{\lambda-1} F^{c^n_t}(x) dx + \int_{1/(2\varepsilon)}^{\infty}x^{\lambda-1}
F^{c^n_t}(x) dx.
\end{equation*}
The Lebesgue dominated convergence Theorem, (\ref{consq1}) and (\ref{consq2}) give
\begin{equation}\label{tightness}
\lim_{\varepsilon \rightarrow 0} \sup_{n\geq 1} \sup_{s\in[0,t]}\Big[\int_0^{\varepsilon}
x^{\lambda} c^n_t(dx) + \int_{1/\varepsilon}^{\infty}x^{\lambda} c^n_t(dx) \Big] = 0,
\end{equation}
for every $t\in [0,\infty)$. Denoting by $c_t(dx) := -\partial_x f(t,x)$ the derivative with respect
to $x$ of $f$ in the sense of distributions for $t \in (0,\infty)$, we deduce from
(\ref{momentcnbounded}), (\ref{consq1}) and (\ref{tightness}) that $c_t(dx) \in \mathcal
M_{\lambda}^+$ with $M_{\lambda}(c_t) \leq e^{ \kappa_2 C^{\lambda}_{\beta} t} M_{\lambda}(c^{in})$.

Consider now $\phi \in \mathcal C^1_ c((0,\infty))$ and recall that $|\phi'(x)|\leq C x^{\lambda-1}$
for some positive constant $C$. On the one hand, the time continuity of $f$ implies that
\begin{equation*}
t\mapsto \int_0^{\infty} \phi(x)c_t(dx) = \int_0^{\infty} \phi'(x) f(t,x)dx
\end{equation*}
is continuous on $[0,\infty)$. On the other hand, the convergence (\ref{consq1}) entails
\begin{align}\label{consq3}
&\lim_{n \rightarrow \infty} \sup_{s\in[0,t]}\Big|\int_0^{\infty} \phi(x) (c^n_s 
- c_s)(dx) \Big| \\
& \qquad = \lim_{n \rightarrow \infty} \sup_{s\in[0,t]}\Big|\int_0^{\infty}
 \phi'(x) \left(F^{c^n_s}(x) - F^{c_s}(x)\right)dx \Big|\nonumber\\
& \qquad \leq  \lim_{n \rightarrow \infty} \sup_{s\in[0,t]}\Big| C \int_0^{\infty}
 x^{\lambda-1} \left(F^{c^n_s}(x) - F^{c_s}(x)\right)dx \Big| = 0, \nonumber
\end{align}
for every $t\geq0$. We then infer from (\ref{tightness}), (\ref{consq3}), 
Lemma \ref{lemma31}., (\ref{AuxIf}) and a density argument that for every 
$\phi \in \mathcal H_{\lambda}$, the map $t\mapsto \int_0^{\infty} \phi(x) c_t(dx)$ is continuous
and
\begin{align*}
& \lim_{n \rightarrow \infty} \sup_{s\in[0,t]}\Big|\int_0^{\infty}
 \phi(x) (c^n_s - c_s)(dx) \Big| = 0 ,\\
& \lim_{n \rightarrow \infty} \sup_{s\in[0,t]} \Big| \frac{1}{2}\int_0^{\infty}\int_0^{\infty}
 (A\phi)(x,y) ] K(x,y) (c^n_s(dx)c^n_s(dy) -c_s(dx)c_s(dy) )\\
& \qquad  \qquad \qquad + \int_0^{\infty}F(x)\int_{\Theta} (B\phi)(\theta,x)  \beta(d\theta) (c^n_s-c_s)(dx) 
 \Big|  =  0.
\end{align*}
We may thus pass to the limit as $n\rightarrow \infty$ in the integrated form of
 (\ref{weakeq2}) for $(c^n_t)_{t\geq 0}$ and deduce that for all $t\geq 0$ and 
 $\phi \in\mathcal H_{\lambda}$, we have
\begin{align}\label{weakeq2final}
& \int_0^{\infty}\phi(x) c_t(x)\, dx  = \int_0^{\infty}\phi(x) c^{in}(x)\, dx  \\
& \qquad \qquad +\frac{1}{2}\int_0^{\infty}\int_0^{\infty} [\phi(x+y) - \phi(x) - \phi(y)] 
K(x,y) c_t(dx) c_t(dy) \nonumber\\
& \qquad \qquad + \int_0^{\infty}\int_{\Theta} \Big [\sum_{i=1}^{\infty}\phi(\theta_i x)
 - \phi(x)\Big] F(x) \beta(d\theta) c_t(dx). \nonumber 
\end{align}
Classical arguments then allows us to differentiate (\ref{weakeq2final})
 with respect to time and conclude that $(c^n_t)_{t\geq 0}$ 
 is a $(c^{in},K,F,\beta,\lambda)$-weak solution to (\ref{forteq2}). 
\end{proof}

\noindent \textbf{Existence and uniqueness for 
$c^{in} \in \mathcal M_{\lambda}^+$.-}   

We have shown existence for 
$c^{in} \in \mathcal M_{\lambda}^+ \cap \mathcal M_{2}^+$.
 Now we are going to extend the previous result to an initial condition only in 
 $\mathcal M_{\lambda}^+$. For this, we consider $(a_n)_{n\geq 1}$ and 
 $(A_n)_{n\geq 1}$ two sequences in $\mathbb R^+$ such that $a_n$ is
  non-increasing and converging to $0$ and $A_n$ non-decreasing and
   tending to $+\infty$ with $0 < a_0 \leq A_0$. We set $B_n = [a_n,A_n]$ and define
\begin{equation*}
c^{in,n}(dx) := c^{in}|_{B_n}(dx),
\end{equation*}
note that trivially we have $M_{2}(c^{in,n}) < \infty$. Next, we call 
$(\tilde c^n_t)_{t\geq 1}$ the $(c^{in,n},K,F,\beta,\lambda)$-weak 
solution to (\ref{forteq2}) constructed in the previous section.  

Owing to Proposition \ref{Prop_Uniqueness}. and (\ref{unicity}), 
  we have for $t\geq0$ and $n\geq1$ 
\begin{align*}
\int_0^{\infty} x^{\lambda-1} \big|F^{\tilde c^{n+1}_t}(x)
-F^{\tilde c^{n}_t}(x)\big| dx \leq e^{C_t} \int_0^{\infty} x^{\lambda-1}
 \big|F^{ c^{in,n+1}}(x)-F^{ c^{in,n}}(x)\big| dx,
\end{align*}
  Next, we have
\begin{align*}
& \int_0^{\infty} x^{\lambda-1} \big|F^{ c^{in,n+1}}(x)-F^{ c^{in,n}}(x)\big|\,dx \\
&\qquad =   \int_0^{+\infty}x^{\lambda-1} \Big| \int_0^{+\infty}
\mathds 1_{[x,+\infty)}(y) \left( c^{in}|_{B_n} - c^{in}|_{B_{n+1}}\right)(dy) \Big| dx \nonumber
\\
&\qquad =   \int_0^{+\infty}x^{\lambda-1}  \int_0^{+\infty}  \mathds 1_{[x,+\infty)}(y) \left( \mathds 1_{[a_{n+1},a_n)}(y) +   \mathds 1_{[A_n,A_{n+1})}(y)\right) c^{in}(dy) dx, \nonumber 
\end{align*}
\begin{sloppypar}
\noindent note that since $\sum_{n\geq 0} \left[ \mathds 1_{[a_{n+1},a_n)}(y)
 + \mathds 1_{[A_n,A_{n+1})}(y)\right] \leq  \mathds 1_{\mathbb R^+}(y)$
  the term in the right-hand of the last inequality is summable.
   We conclude that $\left(t\mapsto F^{\tilde c^{n}_t} \right)_{n\geq 1}$
is a Cauchy sequence in $\mathcal C\big([0,\infty);L^1(0,\infty;x^{\lambda-1}dx)\big)$ and there
is
\end{sloppypar}
\begin{equation*}
f \in \mathcal C\big([0,\infty);L^1(0,\infty;x^{\lambda-1}dx)\big),
\end{equation*}
such that
\begin{equation*}
\lim_{n\rightarrow \infty} \sup_{s\in[0,t]} \int_0^{\infty} x^{\lambda-1}
\big|F^{\tilde c^{n+1}_s}(x)-f(s,x)\big| dx = 0 \,\,\,\, \textrm{ for each }\,\, t\in[0,\infty).\end{equation*}
and we conclude using the same arguments as in the previous case, setting 
$c_t := -\partial_x f(t,x)$ in the sense of distributions, that 
$(c_t)_{t\geq 0}$ is a $(c^{in},K,F,\beta,\lambda)$-weak solution to
 (\ref{forteq2}) in the sense of Definition \ref{Def_solution}. 

This completes the proof of Theorem \ref{Theorem}.
\end{proof}

I would like to thank my Ph.D. advisor Prof. Nicolas Fournier for his insightful
 comments and advices during the preparation of this work. 
 I would like also to thank B\'en\'edicte Haas for the lecture and her comments on this work.

\end{document}